\newenvironment{proof}[1][Proof]{\textbf{#1.} }{\ \rule{0.5em}{0.5em}}
\newtheorem{theorem}{Theorem}[section]
\newtheorem{remark}[theorem]{Remark}
\newtheorem{lemma}[theorem]{Lemma}
\newtheorem{corollary}[theorem]{Corollary}
\numberwithin{equation}{section}
\newcommand{\beq}{\begin{equation}}
\newcommand{\eeq}{\end{equation}}
\definecolor{darkred}{rgb}{.70,.12,.20}
\definecolor{darkgreen}{rgb}{.20,.52,.14}
\newcommand{\bdy}{{\bf y}}
\newcommand{\bdx}{{\bf x}}
\newcommand{\bdz}{{\bf z}}
\newcommand{\bdw}{{\bf w}}
\title
{Fractional Korn and Hardy-type inequalities\\
 for vector fields in half space}
\author{Tadele Mengesha \thanks{This research is supported by the NSF grants DMS-1506512 and DMS-1615726.}}
\date{}
\begin{document}
\maketitle
\abstract{We prove a fractional Hardy-type inequality  for vector fields over the half space based on a modified fractional semi-norm.  A priori, the modified semi-norm is not known to be equivalent to the standard fractional semi-norm and in fact gives a smaller norm, in general. As such, the inequality we prove improves the classical fractional Hardy inequality for vector fields. We will use the inequality to establish the equivalence of a space of functions (of interest) defined over the half space with the classical fractional Sobolev spaces, which amounts to proving a fractional version of the classical Korn's  inequality.  
}
\section{Introduction}
In this paper we prove the following version of the fractional Hardy inequality: for any $s\in (0, 1)$, $ p \in [1, \infty)$ with $ps\neq 1$, there exists a constant $\kappa(p,d,s) > 0$ such that  
  \begin{equation}\label{intro:hardy}
  \int_{\mathbb{R}^{d}_{+}}{|{\bf u}(\bdx)|^{p} \over x_{d}^{ps}} d\bdx \leq \kappa(p, d,s)  \int_{\mathbb{R}^{d}_{+} } \int_{\mathbb{R}^{d}_{+}}\frac{ \left|({\bf u}(\bdy) - {\bf u} ({\bdx})) \cdot \frac{(\bdy -\bdx)}{|\bdy - \bdx|}\right|^{p}}{|\bdy-\bdx|^{d + ps}}d\bdy d\bdx, 
  \end{equation}
  for all ${\bf u}\in C^{1}_{c}(\mathbb{R}^{d}_{+};\mathbb{R}^{d})$. 
  
  For scalar functions, fractional Hardy inequalities that use the standard fractional Sobolev norm $|\cdot|_{W^{s,p}}$ together with their best constant are widely available in the literature. Some of these papers, to cite a few, \cite{BD, D, FS, FS2009} have influenced this work greatly. See also the papers \cite{LS,S,Mazya} for similar estimates.  We note that one could directly use those estimates to obtain similar inequalities for vector fields as well. However, the estimate \eqref{intro:hardy} is distinct from what is available in the sense that in general the seminorm that is used in the right hand side of \eqref{intro:hardy} is smaller than the Gagliardo seminorm $|\cdot|_{W^{s,p}}$, making the estimate tighter. 
  
  Our primary interest in \eqref{intro:hardy} is to apply it to establish  a fractional Korn-type inequality \eqref{intro:korn}  $p=2$. Specifically, we will show that when $s \neq 1/2$, 
  there exists a constant $C(d,s) > 0$ such that
 \begin{equation}\label{intro:korn}
 \int_{\mathbb{R}^{d}_{+} } \int_{\mathbb{R}^{d}_{+}}\frac{ \left|{\bf u}(\bdy) - {\bf u} ({\bdx})\right|^{2}}{|\bdy-\bdx|^{d + 2s}}d\bdy d\bdx \leq C(d,s)\, \int_{\mathbb{R}^{d}_{+} } \int_{\mathbb{R}^{d}_{+}}\frac{ \left|({\bf u}(\bdy) - {\bf u} ({\bdx})) \cdot \frac{(\bdy -\bdx)}{|\bdy - \bdx|}\right|^{2}}{|\bdy-\bdx|^{d + 2s}}d\bdy d\bdx,
\end{equation}
for all ${\bf u}\in L^{2}(\mathbb{R}^{d}_{+}, \mathbb{R}^{d})$.  Shortly, we will explain  why we call the estimate a Korn-type inequality.  For $p=2$, knowing the inequality \eqref{intro:korn} ahead would certainly imply \eqref{intro:hardy} via the standard fractional Hardy inequality. However, at this time we do not know a proof of \eqref{intro:korn} without using \eqref{intro:hardy}.   Nor do we know how to prove \eqref{intro:korn} for $p \neq 2$. 

  The motivation for this  work comes from applications where one needs to work on a function space that uses the seminorm on the right hand side of \eqref{intro:hardy}. To be precise,  we introduce the nonlocal function space of vector fields defined by  $ \mathcal{S}_{\rho, p}(\Omega) := \overline{C^{1}_{c}(\Omega;\mathbb{R}^{d})}^{\|\cdot\|_{\mathcal{S}_{\rho,p}}}, $
  where for  ${\bf u}\in L^{p}(\Omega;\mathbb{R}^{d})$
  \[
  \|{\bf u}\|_{\mathcal{S}_{\rho,p}}^{p} =\|{\bf u}\|_{L^{p}(\Omega)}^{p} +  |{\bf u}|^{p}_{\mathcal{S}_{\rho,p}},
  \]
  and the seminorm $|\cdot|_{\mathcal{S}_{\rho,p}}$ is given by
 \[ |{\bf u}|^{p}_{\mathcal{S}_{\rho,p}} := \int_{\Omega } \int_{\Omega } \rho(\bdy - \bdx)\left|\frac{({\bf u}(\bdy) - {\bf u} ({\bdx}))}{|\bdy-\bdx|} \cdot \frac{(\bdy -\bdx)}{|\bdy - \bdx|}\right|^{p}d\bdy d\bdx. \]
In general, $\Omega$ could be a bounded or an unbounded open subset of $\mathbb{R}^{d}$, and $\rho(\bdz)$, called the kernel,  is a nonnegative integrable function over complement of balls centered at the origin  if $\Omega$ is unbounded, and locally integrable if $\Omega$ is bounded.  The space $C^{1}_{c}({\Omega};\mathbb{R}^{d})$ represents the set of $C^{1}$ functions with compact support in ${\Omega}$.  

For $p = 2,$ the function space $ \mathcal{S}_{\rho, 2}(\Omega)$ has been used in applications, namely in nonlocal continuum mechanics (\cite{Silling2000,Silling2010, Silling2007}) where it appears as the energy space corresponding to the peridynamic  strain energy in a small strain linear model.  Some basic structural  properties of $ \mathcal{S}_{\rho, p}(\Omega)$ have already been investigated in \cite{Mengesha-DuElasticity, Mengesha-Du-non,Du-ZhouM2AN}. It is shown that  for any $1\leq p < \infty$, the  space $\{{\bf u}\in L^{p}(\Omega;\mathbb{R}^{d}): |{\bf u}|^{p}_{\mathcal{S}_{\rho,p}} < \infty\}$ is a separable Banach space with norm $\left(\|{\bf u}\|_{L^{p}}^{p} +  |{\bf u}|^{p}_{\mathcal{S}_{\rho,p}}\right)^{1/p}$, reflexive if $1 < p < \infty$,  and is a Hilbert space when $p =2$.   Under the extra assumption on the kernel that   $\rho$ is radial,  the space is known to support a Poincar\'e-Korn type inequality over subsets that contain no nontrivial zeros of the semi-norm $|\cdot |_{\mathcal{S}_{\rho,p}}$. It is also known that $|{\bf u} |_{\mathcal{S}_{\rho,p}} =0$ if and only if ${\bf u}$ is an affine map with skew-symmetric gradient.  These functional analytic properties of the space were used to demonstrate well posednesss of some nonlocal variational problems using the direct method of calculus of  variations, see \cite{Mengesha-Du-non} for more.

  What distinguishes the space from other nonlocal, difference-based function spaces  is that the seminorm $|{\bf u}|_{\mathcal{S}_{\rho, p}}$ utilizes the smaller projected difference quotient
 \[\mathcal{D}({\bf u})(\bdx, \bdy):= \frac{{\bf u}(\bdy) - {\bf u} ({\bdx})}{|\bdy-\bdx|} \cdot \frac{(\bdy -\bdx)}{|\bdy - \bdx|}\] rather than the full difference quotient, making the space $\mathcal{S}_{\rho, p}(\Omega)$ possibly big. 
 Taking the weighted average of this smaller quantity over enough directions, it is plausible to think that the semi-norm generated will be comparable with the one that is associated with  the full difference quotient. {\em However, this remains unclear}. To be precise, the question of finding a condition on $\rho$, and $\Omega$ so that
 \[
 \mathcal{S}_{\rho, p}(\Omega) = \mathcal{W}_{\rho, p}(\Omega):=\left\{{\bf u}\in L^{p}(\Omega; \mathbb{R}^{d}):  \int_{\Omega } \int_{\Omega } \rho(\bdy - \bdx)\frac{|{\bf u}(\bdy) - {\bf u} ({\bdx}) |^{p} }{|\bdy-\bdx|^{p}}d\bdy d\bdx < \infty\right\},
 \] remains unanswered. 
 It is clear that $\mathcal{W}_{\rho, p}(\Omega) \subset  \mathcal{S}_{\rho, p}(\Omega) $, and when $d=1$, the two space coincide by definition. The problem is therefore in establishing the inclusion $\mathcal{S}_{\rho, p}(\Omega)  \subset  \mathcal{W}_{\rho, p}(\Omega)$. We should remark that for ${\bf u}$ smooth, roughly,  whereas (locally)
\[
\left|{{\bf u}(\bdy) - {\bf u}(\bdx)\over |\bdy -\bdx|}\right|^{p}\approx \left|\nabla {\bf u}(\bdx) {\bdy - \bdx\over |\bdy - \bdx|}\right|^{p} + O(|\bdy-\bdx|)
\]
the projected difference quotient
\[
\mathcal{D}({\bf u})(\bdx, \bdy)\approx  \left|\left\langle\text{Sym}(\nabla {\bf u})(\bdx)  {\bdy - \bdx\over |\bdy - \bdx|},{\bdy - \bdx\over |\bdy - \bdx|}\right\rangle\right|^{p} + O(|\bdy-\bdx|)
\]
where $\text{Sym}(\nabla {\bf u})(\bdx) = 1/2(\nabla {\bf u}(\bdx)^{T} + \nabla {\bf u}(\bdx))$ is the symmetric part of the gradient matrix. This intuition suggests that $\mathcal{S}_{\rho,p}(\Omega)$ is the nonlocal analogue  of the space 
\[
\{ {\bf u}\in L^{p}(\Omega;\mathbb{R}^{d}) : \text{Sym}(\nabla {\bf u}) \in L^{p}(\Omega;\mathbb{R}^{d\times d}) \},  
\] 
which in turn is known to coincide with $W^{1, p}(\Omega;\mathbb{R}^{d})$ via the classical Korn's inequality.  See the paper \cite{Mengesha} that demonstrate, using arguments from \cite{BBM}, that  $\mathcal{S}_{\rho_{n},p}(\Omega) \to W^{1, p}(\Omega;\mathbb{R}^{d})$ when a sequence of radial nonincreasing functions $\rho_{n}$ converge to $ \delta_{0}$, the Dirac Delta measure, in the sense of measures.  As such establishing $\mathcal{S}_{\rho, p}(\Omega) = \mathcal{W}_{\rho, p}(\Omega)$ in the affirmative amounts to proving a version of Korn's inequality for nonlocal function spaces. 

Our interest in the full resolution of the open problem stems from the fact that $\mathcal{W}_{\rho, p}(\Omega)$ may have already known embedding and other smoothness properties. This is the case, for example, when $\rho(\xi) = |\xi|^{-d + p(1-s)}$, for $s\in (0, 1)$, in which case, the space $\mathcal{W}_{\rho, p}(\Omega)$ is precisely the fractional Sobolev space $W^{s,p}(\Omega;\mathbb{R}^{d})$.  This in turn has implications in applications, as one can use these properties to prove regularity of solutions to variational problems whose natural energy space is $ \mathcal{S}_{\rho, p}(\Omega)$.

 With the help of the Hardy inequality \eqref{intro:hardy}, we will establish the equality of function spaces $\mathcal{S}_{\rho,p} (\Omega) = \mathcal{W}_{\rho,p}(\Omega)$ for a special case: when $p=2$, $\rho(\xi) = |\xi|^{-d + 2(1-s)}$, for $s\in (0, 1)$, and $\Omega = \mathbb{R}^{d}_{+}$ or $\mathbb{R}^{d}$.  That is, we prove a version of Korn's inequality for fractional Sobolev spaces $W^{s,2}_{0}(\mathbb{R}^{d}_{+}; \mathbb{R}^{d})$, and this is essentially given in \eqref{intro:korn}. Our approach of the proof of fractional Korn's inequality \eqref{intro:korn} is standard. First we extend vector fields in $\mathcal{S}_{p,s}(\mathbb{R}^{d}_{+})$ (see notation below) to be defined in the whole space in such away that the extended functions belong to $\mathcal{S}_{p,s}(\mathbb{R}^{d})$. Notice that this is a nontrivial task as the commonly used reflection across the hyperplane $x_{d} = 0$ would not be preserving the nonlocal strain energy given by the seminorm $|\cdot|_{\mathcal{S}_{p, s}}$. Nor would extending by zero appropriate, since it is not clear how to control the norm of the extended function. Rather we use an extension operator that has been used by J. A. Nitsche in \cite{Nitsche}  in his simple proof of Korn's second inequality.  As we will see shortly, in showing the boundedness of the extension operator with respect to the seminorm $|\cdot|_{\mathcal{S}_{p, s}}$ 
 we need to first establish the fractional Hardy inequality \eqref{intro:hardy}.  Our proof of \eqref{intro:hardy} follows the recipe given in \cite{FS} where  the so called  ``ground state substitution" is applied to present general Hardy inequalities.   Finally, we will use Fourier transform to show that the space $\mathcal{S}_{2,s}(\mathbb{R}^{d})$ is the same as $W^{s,2}(\mathbb{R}^{d}, \mathbb{R}^{d})$. The latter is already known in \cite{Du-ZhouM2AN}, but for clarity and completeness we will present a proof of it. 

\section{Statement of main result}

\subsection{ Notation} We will be using the following notations throughout the paper. When $\rho(\xi) = |\xi|^{-d + 2(1-s)}$ we denote the function space $ \mathcal{S}_{\rho,p}(\Omega)$ by $ \mathcal{S}_{p,s}(\Omega)$. Points in  $\mathbb{R}^{d}$ will be represented by bold face letters such as $\bdx, \bdy,\bdz$,e.t.c.  Sometimes we may also write component-wise as $\bdx = (\bdx', x_{d})$, where $\bdx' = (x_{1}, x_{2}, \cdots, x_{d-1}) \in \mathbb{R}^{d-1}$. We recall that $\mathbb{R}^{d}_{+} = \{\bdx=(\bdx', x_{d})\in \mathbb{R}^{d}: x_{d} > 0 \}$ and we may use $\Gamma$ to denote the hypersurface $x_{d} = 0$. We write vector fields using boldface letters  and when necessary we write them in components as  ${\bf u} (\bdx) = (u_{1}(\bdx), \cdots, u_{d}(\bdx))$, or as ${\bf u}(\bdx) = ({\bf u}'(\bdx), u_{d}(\bdx))$ where $u_{i}$ is a scalar function for all $i=1,\cdots n$ and ${\bf u}'(\bdx) = (u_{1}(\bdx), \cdots u_{d-1}(\bdx))\in \mathbb{R}^{d-1}$.

\subsection{Main results}
The first result is a fractional Korn-type inequality for vector fields whose statement is given below. 
\begin{theorem}[{\bf Fractional Korn's inequality}] \label{main-theorem} For $d\geq 1$, and $s\neq {1\over 2}$, there exists a constant $C = C(s, d)>0$ such that
\[
|{\bf u}|_{W^{s,2}(\mathbb{R}^{d}_{+};\mathbb{R}^{d}) } \leq C |{\bf u}|_{\mathcal{S}_{2, s}(\mathbb{R}^{d}_{+})}, 
\]
for any ${\bf u}\in C_{c}^{1}(\mathbb{R}^{d}_{+}; \mathbb{R}^{d})$.  In particular, by density, $W_{0}^{s,2}(\mathbb{R}^{d}_{+};\mathbb{R}^{d})  = \mathcal{S}_{2, s}(\mathbb{R}^{d}_{+}).$ 
\end{theorem}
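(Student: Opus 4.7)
The plan is to follow the roadmap sketched in the introduction: construct a Nitsche-type extension operator $E$ from $\mathbb{R}^{d}_{+}$ to $\mathbb{R}^{d}$, establish the Korn-type inequality on all of $\mathbb{R}^{d}$ by Fourier analysis, and then combine the two using the fractional Hardy inequality \eqref{intro:hardy}.

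I would begin with the whole-space step. Both $|\cdot|_{W^{s,2}(\mathbb{R}^{d})}$ and $|\cdot|_{\mathcal{S}_{2,s}(\mathbb{R}^{d})}$ are translation-invariant quadratic forms, so by Plancherel each is of the form $\int_{\mathbb{R}^{d}} \overline{\hat{\bf v}(\xi)} \cdot M(\xi) \hat{\bf v}(\xi)\, d\xi$ for an explicit Hermitian matrix symbol. For $W^{s,2}$ the symbol is a scalar multiple of $|\xi|^{2s}I$; for $\mathcal{S}_{2,s}(\mathbb{R}^{d})$, substituting ${\bf h}=\bdy-\bdx$ in polar coordinates and evaluating the radial integral $\int_{0}^{\infty} r^{-1-2s}(1-\cos(r\omega\cdot\xi))\,dr = c_{s}|\omega\cdot\xi|^{2s}$ produces the symbol $c_{s}|\xi|^{2s}A(\xi/|\xi|)$, with $A(\nu)=\int_{S^{d-1}}|\omega\cdot\nu|^{2s}\,\omega\otimes\omega\,d\omega$. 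The matrix $A(\nu)$ is uniformly positive-definite on the sphere by rotational symmetry, which yields $|{\bf v}|_{W^{s,2}(\mathbb{R}^{d};\mathbb{R}^{d})} \leq C|{\bf v}|_{\mathcal{S}_{2,s}(\mathbb{R}^{d})}$ for ${\bf v}\in C_{c}^{1}(\mathbb{R}^{d};\mathbb{R}^{d})$.

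Next I would introduce the Nitsche extension: for ${\bf u}=({\bf u}',u_{d})\in C_{c}^{1}(\mathbb{R}^{d}_{+};\mathbb{R}^{d})$ and $x_{d}<0$, set $(E{\bf u})'(\bdx',x_{d}) = {\bf u}'(\bdx',-x_{d})$ and $(E{\bf u})_{d}(\bdx',x_{d}) = -u_{d}(\bdx',-x_{d})$. A direct computation shows $|\mathrm{Sym}(\nabla E{\bf u})(\bdx',x_{d})| = |\mathrm{Sym}(\nabla {\bf u})(\bdx',-x_{d})|$, which is the pointwise local analogue of what must now be proved in the nonlocal seminorm:
\[
|E{\bf u}|_{\mathcal{S}_{2,s}(\mathbb{R}^{d})} \leq C|{\bf u}|_{\mathcal{S}_{2,s}(\mathbb{R}^{d}_{+})}.
\]
Splitting the double integral over the four quadrants $\mathbb{R}^{d}_{\pm}\times\mathbb{R}^{d}_{\pm}$, the two same-sign blocks are controlled directly by $|{\bf u}|_{\mathcal{S}_{2,s}(\mathbb{R}^{d}_{+})}^{2}$ using the reflection symmetry. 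For the cross block with $\bdx\in\mathbb{R}^{d}_{+}$ and $\bdy\in\mathbb{R}^{d}_{-}$, reflecting $\bdy$ to $\tilde{\bdy} = (\bdy',-y_{d})\in\mathbb{R}^{d}_{+}$ rewrites the numerator as $|(R{\bf u}(\tilde{\bdy})-{\bf u}(\bdx))\cdot (\bdy-\bdx)|^{2}$ with $R=\mathrm{diag}(1,\dots,1,-1)$; adding and subtracting ${\bf u}(\tilde{\bdy})$ decomposes this into an $\mathcal{S}_{2,s}(\mathbb{R}^{d}_{+})$-like piece plus a remainder proportional to $|u_{d}(\tilde{\bdy})|^{2}$ against a boundary-concentrated kernel. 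After explicit integration in the direction transverse to $\Gamma$, the remainder reduces to a multiple of $\int_{\mathbb{R}^{d}_{+}}|{\bf u}(\bdx)|^{2}/x_{d}^{2s}\,d\bdx$, which is exactly the left-hand side of \eqref{intro:hardy} with $p=2$. Invoking that Hardy inequality closes the estimate, and the restriction $s\neq 1/2$ enters here through the condition $ps\neq 1$ of \eqref{intro:hardy}.

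Chaining the pieces, for ${\bf u}\in C_{c}^{1}(\mathbb{R}^{d}_{+};\mathbb{R}^{d})$ one obtains
\[
|{\bf u}|_{W^{s,2}(\mathbb{R}^{d}_{+};\mathbb{R}^{d})} \leq |E{\bf u}|_{W^{s,2}(\mathbb{R}^{d};\mathbb{R}^{d})} \leq C|E{\bf u}|_{\mathcal{S}_{2,s}(\mathbb{R}^{d})} \leq C|{\bf u}|_{\mathcal{S}_{2,s}(\mathbb{R}^{d}_{+})},
\]
which is the stated inequality. The reverse direction $|{\bf u}|_{\mathcal{S}_{2,s}(\mathbb{R}^{d}_{+})}\leq |{\bf u}|_{W^{s,2}(\mathbb{R}^{d}_{+})}$ is pointwise inside the integrand, and the identification $W^{s,2}_{0}(\mathbb{R}^{d}_{+};\mathbb{R}^{d}) = \mathcal{S}_{2,s}(\mathbb{R}^{d}_{+})$ then follows from the definition of $\mathcal{S}_{2,s}$ as the $\|\cdot\|_{\mathcal{S}_{2,s}}$-completion of $C^{1}_{c}$. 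The main obstacle will be the cross-term analysis in the extension bound: the algebraic structure of the Nitsche reflection inside the projected difference quotient must be tracked carefully enough to isolate exactly the boundary Hardy remainder and no more.
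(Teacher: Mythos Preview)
Your overall strategy matches the paper's: prove the whole-space Korn inequality by Fourier analysis, build an extension operator bounded in the $\mathcal{S}_{2,s}$-seminorm, and close with the fractional Hardy inequality on the cross term. The Fourier step and the final chaining are essentially identical to what the paper does.

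The genuine difference is in the extension. What you write down is \emph{not} Nitsche's extension but the simpler odd--even reflection $E{\bf u}(\bdx',x_d)=({\bf u}'(\bdx',-x_d),-u_d(\bdx',-x_d))$ for $x_d<0$. The paper instead uses Nitsche's two-point formula $U_i=2u_i(\bdx',-x_d)-u_i(\bdx',-3x_d)$ for $i<d$ and $U_d=-2u_d(\bdx',-x_d)+3u_d(\bdx',-3x_d)$. With the paper's choice the cross block produces a remainder $\bigl(u_d(\bdy',-3y_d)-u_d(\bdy',-y_d)\bigr)x_d$, a \emph{difference} of values of $u_d$, which is then controlled by a scaling lemma ($F_\lambda$-stability of $\mathcal{S}_{p,s}$) combined with Hardy. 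Your simpler reflection works too and avoids that detour, but your description of the cross-term algebra is not quite right: after adding and subtracting ${\bf u}(\tilde{\bdy})$ the piece $({\bf u}(\tilde{\bdy})-{\bf u}(\bdx))\cdot(\bdy-\bdx)$ is \emph{not} yet an $\mathcal{S}_{2,s}(\mathbb{R}^d_+)$ integrand, because the direction is $\bdy-\bdx$ rather than $\tilde{\bdy}-\bdx$. Correcting the direction costs a second remainder, and the net error is $2u_d(\tilde{\bdy})\,x_d-2u_d(\bdx)\,y_d$, i.e.\ \emph{two} Hardy-type terms, one at $\tilde{\bdy}$ and one at $\bdx$, not just the single $|u_d(\tilde{\bdy})|^2$ term you name. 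Both are handled by the same kernel computation $\int_{\mathbb{R}^d_+}x_d^2\bigl(|\bdx'-\bdy'|^2+(x_d+\tilde y_d)^2\bigr)^{-(d+2s+2)/2}d\bdx=\gamma\,\tilde y_d^{-2s}$ followed by \eqref{intro:hardy}, so the approach is sound once you track this carefully. The payoff of your variant is a shorter argument with no auxiliary scaling lemma; the paper's two-point formula buys a genuine $C^1$-across-$\Gamma$ extension (relevant when ${\bf u}$ does not vanish near $\Gamma$), though for the density argument starting from $C^1_c(\mathbb{R}^d_+)$ this is not needed.
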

Note in the previous theorem that if $s<{1\over 2}$, then $W_{0}^{s,2}(\mathbb{R}^{d}_{+};\mathbb{R}^{d}) = W^{s,2}(\mathbb{R}^{d}_{+};\mathbb{R}^{d})$. As a consequence, in this case we have the equivalence of $W^{s,2}(\mathbb{R}^{d}_{+};\mathbb{R}^{d})$ and  $\mathcal{S}_{2, s}(\mathbb{R}^{d}_{+})$. As we discussed earlier, the proof of the Theorem \ref{main-theorem}  relies on the following extension theorem, which we believe is interesting in its own right. Notice in particular that the statement is valid for any $p \in [1,\infty)$. 
\begin{theorem} [{\bf Extension operator}]\label{thm:extension}
Let $d\geq 1$, $p\in [1,\infty)$ and $0<s<1$ and $ps\neq 1$. There exists an extension operator $$E : \mathcal{S}_{p,s}(\mathbb{R}^{d}_{+}) \to \mathcal{S}_{p,s}(\mathbb{R}^{d}) $$ and a positive constant $C = C(p,d,s)$ such that for any  ${\bf u}\in \mathcal{S}_{p,s}(\mathbb{R}^{d}_{+})$, and ${\bf U} = E{\bf u}$ we have that ${\bf U} = {\bf u}$ a.e. in $\mathbb{R}^{d}_{+}$, \quad  ${\bf U} = E{\bf u} \in \mathcal{S}_{p,s}(\mathbb{R}^{d})$ and
\[
\begin{split}
&|{\bf U}|_{\mathcal{S}_{p, s}(\mathbb{R}^{d})} \leq C |{\bf u}|_{\mathcal{S}_{p,s}(\mathbb{R}^{d}_{+})}. 
\end{split}
\]
\end{theorem}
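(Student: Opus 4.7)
The plan is to use the Nitsche-type reflection
\[
E\mathbf{u}(\bdx) := \begin{cases} \mathbf{u}(\bdx) & x_d > 0, \\ (\mathbf{u}'(\bdx', -x_d),\, -u_d(\bdx', -x_d)) & x_d < 0, \end{cases}
\]
which flips only the normal component and is the nonlocal analogue of the extension J.~A.~Nitsche employs for Korn's second inequality. With $\mathbf{U} := E\mathbf{u}$, I would partition the integration domain $\mathbb{R}^d \times \mathbb{R}^d$ in $|\mathbf{U}|^p_{\mathcal{S}_{p,s}(\mathbb{R}^d)}$ according to the signs of $x_d$ and $y_d$. The integral over $\mathbb{R}^d_+ \times \mathbb{R}^d_+$ equals $|\mathbf{u}|^p_{\mathcal{S}_{p,s}(\mathbb{R}^d_+)}$ by definition, and the integral over $\mathbb{R}^d_- \times \mathbb{R}^d_-$ reduces to it after the substitution $x_d \mapsto -x_d$, $y_d \mapsto -y_d$; here one uses the easy fact that simultaneously reflecting both the vector $\mathbf{u}(\bdy) - \mathbf{u}(\bdx)$ and the direction $\bdy - \bdx$ across $\Gamma$ preserves their inner product.

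The substantive work is the cross region, which by symmetry reduces to $x_d > 0 > y_d$. Setting $\tilde{\bdy} := (\bdy', -y_d) \in \mathbb{R}^d_+$, a direct computation produces the key algebraic identity
\[
(E\mathbf{u}(\bdy) - E\mathbf{u}(\bdx)) \cdot (\bdy - \bdx) = (\mathbf{u}(\tilde{\bdy}) - \mathbf{u}(\bdx)) \cdot (\tilde{\bdy} - \bdx) + 2\bigl(x_d\, u_d(\tilde{\bdy}) + \tilde{y}_d\, u_d(\bdx)\bigr).
\]
Since $x_d, \tilde{y}_d > 0$ one has $|\bdy-\bdx|^2 = |\tilde{\bdy}'-\bdx'|^2 + (x_d + \tilde{y}_d)^2 \geq |\tilde{\bdy}-\bdx|^2$, so the kernel factor $|\bdy - \bdx|^{-(d+p+ps)}$ is dominated by $|\tilde{\bdy}-\bdx|^{-(d+p+ps)}$. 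Together with $(a+b)^p \leq 2^{p-1}(a^p+b^p)$, this shows the contribution of the first summand in the identity to the cross integral is controlled by a constant multiple of $|\mathbf{u}|^p_{\mathcal{S}_{p,s}(\mathbb{R}^d_+)}$ after undoing the change of variables.

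The main obstacle is the boundary remainder $2(x_d u_d(\tilde{\bdy}) + \tilde{y}_d u_d(\bdx))$, and this is exactly where the fractional Hardy inequality \eqref{intro:hardy} is indispensable. By the $(\bdx, \tilde{\bdy})$-symmetry of the kernel it suffices to dominate
\[
I := \int_{\mathbb{R}^d_+}\!\!\int_{\mathbb{R}^d_+} \frac{x_d^p\, |u_d(\tilde{\bdy})|^p}{\bigl(|\tilde{\bdy}'-\bdx'|^2 + (x_d+\tilde{y}_d)^2\bigr)^{(d+p+ps)/2}}\, d\tilde{\bdy}\, d\bdx.
\]
Performing the tangential integration in $\bdx' \in \mathbb{R}^{d-1}$ against this Riesz-type kernel yields a factor $C(x_d+\tilde{y}_d)^{-1-p(1+s)}$, after which the $x_d$-integral, resolved via the scaling $x_d = \tilde{y}_d\, t$, collapses to $C \tilde{y}_d^{-ps}$. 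Consequently $I \leq C \int_{\mathbb{R}^d_+} |\mathbf{u}(\tilde{\bdy})|^p\, \tilde{y}_d^{-ps}\, d\tilde{\bdy}$, and the fractional Hardy inequality \eqref{intro:hardy} — precisely with its hypothesis $ps \neq 1$ matching that of Theorem \ref{thm:extension} — bounds the latter by $C|\mathbf{u}|^p_{\mathcal{S}_{p,s}(\mathbb{R}^d_+)}$. Without \eqref{intro:hardy} one has no way to trade this boundary-weighted integral of $u_d$ for the nonlocal seminorm of $\mathbf{u}$, which is why the Hardy estimate had to be proved first.
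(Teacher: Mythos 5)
Your proof is correct, but it is genuinely different from the paper's and in fact simpler. One small misattribution first: the map you write down, which preserves the tangential components under $x_d\mapsto -x_d$ and flips only $u_d$, is the ordinary mirror (odd/even) reflection, not Nitsche's extension. The extension Nitsche uses — and the one the paper uses — is the higher-order reflection
$U_i(\bdx',x_d)=2u_i(\bdx',-x_d)-u_i(\bdx',-3x_d)$ for $i<d$ and
$U_d(\bdx',x_d)=-2u_d(\bdx',-x_d)+3u_d(\bdx',-3x_d)$, designed to match traces when $u$ does \emph{not} vanish on $\Gamma$. Since $\mathcal{S}_{p,s}(\mathbb{R}^d_+)$ is by definition the closure of $C^1_c(\mathbb{R}^d_+;\mathbb{R}^d)$, the function vanishes near $\Gamma$ anyway, so your plain reflection is admissible and produces a compactly supported $C^1$ vector field; that is precisely what makes your shortcut legitimate.

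With that substitution your argument goes through. Your algebraic identity
$(E\mathbf{u}(\bdy)-E\mathbf{u}(\bdx))\cdot(\bdy-\bdx)=(\mathbf{u}(\tilde\bdy)-\mathbf{u}(\bdx))\cdot(\tilde\bdy-\bdx)+2\bigl(x_d u_d(\tilde\bdy)+\tilde y_d u_d(\bdx)\bigr)$
checks out, and the kernel domination $|\bdy-\bdx|\ge|\tilde\bdy-\bdx|$ on the cross region is right. The noteworthy structural differences from the paper are: (i) your $I^-$ term is an exact equality (simultaneous reflection preserves both the increment and the direction), whereas the paper must split $I^-$ into two pieces $G_1,G_2$ and rescale; (ii) your cross term produces a single remainder, whereas the paper's higher-order reflection produces three pieces $\mathcal{I}^\pm_1,\mathcal{I}^\pm_2,\mathcal{I}^\pm_3$; (iii) your remainder $x_d u_d(\tilde\bdy)+\tilde y_d u_d(\bdx)$ involves pointwise values of $u_d$, so the tangential-plus-scaling computation yields $\int x_d^p(\,\cdot\,)^{-(d+p+ps)}d\bdx' dx_d=C\,\tilde y_d^{-ps}$ and Theorem~\ref{hardy-type} applies directly; the paper instead gets a \emph{difference} $u_d(\bdy',-3y_d)-u_d(\bdy',-y_d)$, which forces it to introduce the scaling map $F_\lambda$ of Lemma~\ref{linear-map-estimate} to recognize that difference as the $d$-th component of $F_3(\mathbf{u})-\mathbf{u}$ before Hardy can be invoked. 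Your route therefore dispenses with Lemma~\ref{linear-map-estimate} entirely. What the paper's approach buys is that the higher-order reflection would remain meaningful for vector fields with nontrivial boundary trace, which could matter if one wanted an extension operator on the full $W^{s,p}(\mathbb{R}^d_+;\mathbb{R}^d)$ rather than the $W^{s,p}_0$-type space used here; within the stated setting, your argument is the more economical one.
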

The boundedness of the extension operator $E$ is in turn possible by the application of a Hardy-type inequality that we state below. 
\begin{theorem}[{\bf A Hardy-type inequality}]\label{hardy-type} Suppose that $d\geq 1$, $p\in [1,\infty)$, $s\in (0,1)$ and $ps\neq 1$. Then there exists a constant $\kappa(p, d, s) > 0$ such that
\[
\int_{\mathbb{R}^{d}_{+}}\frac{|{\bf u}(\bdx)|^{p}}{x_{d}^{ps}} d\bdx
\leq \kappa(p,d, s) |{\bf u}|^{p}_{\mathcal{S}_{p,s}(\mathbb{R}^{d}_{+})},
\]
for all ${\bf u}  \in C^{1}_{c}(\mathbb{R}^{d}_{+}; \mathbb{R}^{d})$. In particular, by density, the inequality holds for all ${\bf u} \in \mathcal{S}_{p,s}(\mathbb{R}^{d}_{+})$. 
\end{theorem}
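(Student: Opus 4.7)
The plan is to implement the \emph{ground state substitution} of Frank and Seiringer \cite{FS}---originally designed for scalar fractional Hardy inequalities---in the vector-valued setting, exploiting the fact that after projecting onto $\omega := (\bdy-\bdx)/|\bdy-\bdx|$ the quantity $({\bf u}(\bdy) - {\bf u}(\bdx))\cdot\omega$ is a \emph{scalar}. I will pick a scalar ground state $w(\bdx) = x_d^\alpha$, with the exponent $\alpha = \alpha(p,s,d)$ to be chosen, and substitute ${\bf u} = w{\bf v}$. Setting $A(\bdz) := {\bf v}(\bdz)\cdot\omega$ and $U(\bdz) := {\bf u}(\bdz)\cdot\omega = w(\bdz) A(\bdz)$, the scalar Frank-Seiringer pointwise inequality then applies directly to the pair $w(\bdx)A(\bdx)$, $w(\bdy)A(\bdy)$. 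For $p=2$ one has the clean algebraic identity
\[
|w_y A_y - w_x A_x|^2 = w_y w_x\, |A_y - A_x|^2 + (w_y - w_x)\left(\frac{U_y^2}{w_y} - \frac{U_x^2}{w_x}\right),
\]
and for general $p\geq 1$ the analogous Frank-Seiringer pointwise \emph{inequality} produces a nonnegative $v$-remainder plus a residue linear in $|U|^p$.

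Next, divide by $|\bdy-\bdx|^{d+ps}$ and integrate over $\mathbb{R}^d_+ \times \mathbb{R}^d_+$. Discarding the nonnegative $v$-remainder, symmetrizing in $\bdx \leftrightarrow \bdy$, and passing to polar coordinates $\bdx = \bdy - r\omega$ reduces the $\bdx$-integration to $\omega \in S^{d-1}$ and $r \in (0, R(\omega,y_d))$, where $R = y_d/\omega_d$ when $\omega_d > 0$ and $R = \infty$ otherwise. The radial integral is then explicit, and the residue produces an expression of the form
\[
c\int_{\mathbb{R}^d_+} \frac{1}{y_d^{ps}} \int_{S^{d-1}} |{\bf u}(\bdy)\cdot\omega|^p\, J(\omega)\, d\omega\, d\bdy,
\]
where $J(\omega)$ is an explicit function of $\omega_d$ alone, depending on $\alpha$, $p$, and $s$. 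The assumption $ps\neq 1$ is needed here: the defining radial integrals for $J$ converge precisely in the two disjoint regimes $ps<1$ and $ps>1$, with $\alpha$ taken in different admissible ranges in each, and no admissible $\alpha$ exists at the critical value $ps=1$.

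To close the argument one needs the pointwise angular bound
\[
\int_{S^{d-1}} |{\bf u}(\bdy)\cdot\omega|^p\, J(\omega)\, d\omega \;\geq\; c(p,d,s)\, |{\bf u}(\bdy)|^p
\]
uniformly in $\bdy$. For $p=2$ this reduces to the positive definiteness of the symmetric matrix $M := \int_{S^{d-1}} \omega\otimes\omega\, J(\omega)\, d\omega$. By the rotational symmetry of $J$ about the $e_d$-axis, $M$ has only two distinct eigenvalues---one along $e_d$, one on the orthogonal complement---each expressible as an explicit one-dimensional integral whose positivity must be verified by direct computation. For general $p \in [1,\infty)$, an analogous direct lower bound on the angular integrand is required.

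The main obstacle is this last step. Since the radial cutoff $R(\omega,y_d)$ is finite only when $\omega_d > 0$, the weight $J$ is \emph{asymmetric} in the sign of $\omega_d$, and for poor choices of $\alpha$ one of the two eigenvalues of $M$ can have the wrong sign. The split $ps<1$ versus $ps>1$, together with a distinct choice of $\alpha$ in each regime, is exactly what is needed to make both eigenvalues strictly positive. Once this is verified by explicit computation, combining the three steps yields the inequality with constant $\kappa(p,d,s)$ proportional to the reciprocal of the smaller eigenvalue of $M$.
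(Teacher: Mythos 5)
Your outline correctly identifies the paper's strategy: a Frank--Seiringer ground state substitution with $w(\bdx)=x_d^{\alpha}$ applied to the projected scalar $({\bf u}(\bdy)-{\bf u}(\bdx))\cdot\omega$, followed by the pointwise inequality $|a-t|^p\geq(1-t)^{p-1}(|a|^p-t)$; your $p=2$ identity is exactly the $p=2$ case of that inequality. However, the parametrization you choose makes the key positivity step substantially harder, and you explicitly leave it unverified. You pass to polar coordinates $\bdx=\bdy-r\omega$ and must then show that the weight $J(\omega)$, obtained from a truncated radial integral whose cutoff depends on $\mathrm{sgn}\,\omega_d$, yields a positive definite matrix $M=\int_{S^{d-1}}\omega\otimes\omega\,J(\omega)\,d\omega$. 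The paper instead slices the $\bdy$-integral by level sets of $y_d$: since $w$ depends only on the last coordinate, the $(d-1)$-dimensional horizontal integral factors out and gives the manifestly nonnegative angular functional $\mathfrak{f}({\bf v})=\int_{\mathbb{R}^{d-1}}|{\bf v}'\cdot{\bf z}'+v_d|^p(|{\bf z}'|^2+1)^{-(d+ps+p)/2}\,d{\bf z}'$, while the remaining $y_d$-integral collapses --- using the identity $\alpha(p-1)-ps+1=-\alpha$ after the substitution $t\mapsto 1/r$ --- to the single positive constant $\sigma=\int_0^1|t^{\alpha}-1|^p\,|t-1|^{-ps-1}\,dt$. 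The residue is then $2\sigma\int x_d^{-ps}\mathfrak{f}({\bf u}(\bdx))\,d\bdx$, a positive product with no sign ambiguity, and the lower bound $\mathfrak{f}({\bf v})\geq \tfrac{\eta_1}{2}|v_d|^p+\tfrac{\eta_2}{2}|{\bf v}'|^p$ comes from an elementary convexity inequality together with the vanishing of the odd cross term (Corollary~\ref{estimate-for-frakf}). Your route forces you to analyze the asymmetric $J$ directly, you correctly identify this as the ``main obstacle,'' and you do not resolve it: that is a genuine gap, sitting exactly where the heart of the proof lies.

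Your claim that $\alpha$ must be drawn from different admissible ranges in the two regimes $ps<1$ and $ps>1$ also does not match what actually happens. The paper uses the single formula $\alpha=(ps-1)/p$ throughout; it merely changes sign across $ps=1$ and degenerates there (which is why $ps\neq 1$ is excluded). This is precisely the exponent for which $\alpha(p-1)-ps+1=-\alpha$, the relation that merges $\int_0^1$ and $\int_1^\infty$ into the single constant $\sigma$. With that factorization in hand, positivity is automatic and no eigenvalue bookkeeping is required; the ``wrong-sign eigenvalue'' you worry about is an artifact of the harder parametrization rather than an intrinsic difficulty of the problem.
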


\begin{remark} We would like to mention that in this work no effort has been made in obtaining the best constant $\kappa$.  Without accounting for the best constant,  the scaler version of Hardy-type inequality found in \cite{BD, D, FS, FS2009, LS,S,Mazya} can be deduced from Theorem \ref{hardy-type}. Indeed, given a scaler function ${ u}  \in C^{1}_{c}(\mathbb{R}^{d}_{+})$, we can apply the estimate in Theorem \ref{hardy-type} on the vector field $u {\bf e}_{1} = (u, 0, \cdots, 0) \in C^{1}_{c}(\mathbb{R}^{d}_{+}; \mathbb{R}^{d})$ to obtain that 
\[
\int_{\mathbb{R}^{d}_{+}}\frac{|{ u}(\bdx)|^{p}}{x_{d}^{ps}} d\bdx \leq \kappa(p, d,s)  \int_{\mathbb{R}^{d}_{+} } \int_{\mathbb{R}^{d}_{+}}\frac{ \left|{u}(\bdy) - {u} ({\bdx}) \right|^{p}}{|\bdy-\bdx|^{d + ps}}  \frac{|y_{1} -x_{1}|^{p}}{|\bdy - \bdx|^{p}}d\bdy d\bdx. 
\]
\end{remark}

\section{Proof of Hardy-type inequality for vector fields}
Our proof of Theorem \ref{hardy-type} follows the argument in \cite{FS} where the  ``ground state substitution"  ${\bf v}(\bdx) = w(\bdx) {\bf u}(\bdx)$ is used. Here, unlike in \cite{FS}, the appropriately chosen function $w$ in our case solves a "weighted" nonlocal equation that is compatible with the semi-norm $|\cdot|_{\mathcal{S}_{p, s}}$. We first establish  this fact in the following lemma. 
\begin{lemma}\label{euler-lagrange} Suppose that $d\geq 1$, $p\in [1,\infty)$, $s\in (0,1)$, and $ps\neq 1 $. 
Let $\alpha = {(ps-1)\over p}$, and $w(\bdx) = x_{d}^{\alpha}$. Then for any ${\bf v} = ({\bf v}', v_{d})\in \mathbb{R}^{d}$, any $\bdx\in \mathbb{R}^{d}_{+}$
\[
\begin{split}
\lim_{\epsilon\to 0} \int_{ \{\bdy\in \mathbb{R}^{d}_{+}: |x_d - y_d|>\epsilon\}} |{\bf v}\cdot (\bdy-\bdx)|^{p}&\frac{(w(\bdy)-w(\bdx)) |w(\bdy)-w(\bdx)|^{p-2}}{|\bdy-\bdx|^{d+ps+p}}d\bdy \\
&= -\sigma(s,p,d) x_{d}^{-ps} w(\bdx)^{p-1} \mathfrak{f}({\bf v}), 
\end{split}
\]
the convergence being locally uniformly in $\mathbb{R}^{d}_{+}$, where the function $\mathfrak{f}({\bf v})$, and  the positive number $\sigma$ are  given by
\begin{equation}\label{def-f-sigma}
\small{\mathfrak{f}({\bf v}) = \int_{\mathbb{R}^{d-1}}{|{\bf v}'\cdot {\bf z}'  + v_{d}|^{p}  \over (|{\bf z}'|^{2} + 1)^{{d + ps + p\over 2}}} d{\bf z}'  \quad \text{and} \quad  \sigma(d,p,s) = \int_{0}^{1} {|t^\alpha - 1|^{p} \over |t - 1|^{ps +1}} dt.}
\end{equation}
\end{lemma}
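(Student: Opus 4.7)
The plan is to exploit that $w(\bdx) = x_d^\alpha$ depends only on the last coordinate, so the $\bdy'$-integration can be carried out first and separates off as a ${\bf z}'$-integral. Fix $\bdx \in \mathbb{R}^d_+$ and $y_d > 0$ with $y_d \neq x_d$, and substitute $\bdy' = \bdx' + (y_d - x_d){\bf z}'$; then $d\bdy' = |y_d - x_d|^{d-1}\, d{\bf z}'$, $\bdy - \bdx = (y_d - x_d)({\bf z}', 1)$, $|\bdy - \bdx| = |y_d - x_d|(|{\bf z}'|^2 + 1)^{1/2}$, and ${\bf v}\cdot(\bdy - \bdx) = (y_d - x_d)({\bf v}'\cdot {\bf z}' + v_d)$. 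Collecting the powers of $|y_d - x_d|$, the inner $\bdy'$-integral yields exactly $\mathfrak{f}({\bf v})$ and the expression under the limit becomes
\[
\mathfrak{f}({\bf v}) \int_{\{y_d > 0,\, |y_d - x_d| > \epsilon\}} \frac{(y_d^\alpha - x_d^\alpha)|y_d^\alpha - x_d^\alpha|^{p-2}}{|y_d - x_d|^{ps + 1}}\, dy_d.
\]
Substituting $y_d = t x_d$ extracts all the $x_d$-dependence: using $\alpha = (ps-1)/p$ the overall factor becomes $x_d^{-ps + \alpha(p-1)} = x_d^{-ps} w(\bdx)^{p-1}$, and the remaining cutoff is $|t-1| > \epsilon/x_d$. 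It therefore suffices to evaluate the principal value
\[
I := \lim_{\eta \to 0^+} \left(\int_0^{1-\eta} + \int_{1+\eta}^\infty\right) \frac{(t^\alpha - 1)|t^\alpha - 1|^{p-2}}{|t - 1|^{ps + 1}}\, dt
\]
and show that $I = -\sigma(d, p, s)$.

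To compute $I$ I would fold the tail $(1+\eta, \infty)$ onto $(0,1)$ via the involution $t = 1/u$. Using $u^{-\alpha} - 1 = -u^{-\alpha}(u^\alpha - 1)$, $|u^{-1} - 1| = (1-u)/u$ for $u \in (0,1)$, and the algebraic identity $ps - 1 - \alpha(p-1) = \alpha$ (which is equivalent to the definition of $\alpha$), a direct computation yields
\[
\int_{1+\eta}^\infty \frac{(t^\alpha - 1)|t^\alpha - 1|^{p-2}}{|t - 1|^{ps + 1}}\, dt = -\int_0^{1/(1+\eta)} \frac{(u^\alpha - 1)|u^\alpha - 1|^{p-2}}{|u - 1|^{ps + 1}}\, u^\alpha\, du.
\]
Since $1 - \eta < 1/(1+\eta)$, combining with the $(0, 1-\eta)$ piece produces
\[
\int_0^{1-\eta} \frac{(t^\alpha - 1)|t^\alpha - 1|^{p-2}}{|t-1|^{ps+1}}(1 - t^\alpha)\, dt \,-\, \int_{1-\eta}^{1/(1+\eta)} \frac{(t^\alpha - 1)|t^\alpha - 1|^{p-2}}{|t-1|^{ps+1}}\, t^\alpha\, dt.
\]
The first integrand simplifies to $-|t^\alpha - 1|^p/|t-1|^{ps+1} \leq 0$, so by monotone convergence the first integral tends to $-\sigma(d,p,s)$. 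The second (boundary) term is $O(\eta^{p - ps})$ since the integrand is $O(|t-1|^{p - ps - 2})$ near $t=1$ and the interval has length $O(\eta^2)$; this vanishes because $s < 1$.

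Locally uniform convergence in $\bdx$ is then immediate: on a compact $K \subset \mathbb{R}^d_+$ one has $x_d \geq c > 0$, so the rescaled cutoff $\epsilon/x_d$ is uniformly small and the rate of convergence of the $t$-integral is independent of $\bdx$. The finiteness of $\mathfrak{f}({\bf v})$ (needs only $ps > -1$) and of $\sigma(d,p,s)$ (needs $0 < s < 1$ and $ps > 0$) follows from elementary power counting near infinity, near $t=1$, and near $t=0$. The main obstacle is the delicate sign-and-exponent bookkeeping in the $t \mapsto 1/t$ folding: it is precisely the choice $\alpha = (ps-1)/p$ that makes the exponent of $u$ collapse to $u^\alpha$, so that $(1 - t^\alpha)(t^\alpha - 1)|t^\alpha - 1|^{p-2} = -|t^\alpha - 1|^p$ and the two pieces combine cleanly into the desired $-\sigma(d,p,s)$.
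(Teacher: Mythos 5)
Your proof follows essentially the same route as the paper: factor out the $\bdy'$-integral into $\mathfrak{f}({\bf v})$ via the linear substitution $\bdz' = (\bdy'-\bdx')/(y_d-x_d)$, scale out $x_d$ with $y_d = tx_d$, and fold $(1,\infty)$ onto $(0,1)$ via $t\mapsto 1/t$ using the identity $\alpha(p-1)-ps+1=-\alpha$. You are in fact somewhat more careful than the paper near $t=1$: the paper splits the principal value as $\int_0^1 + \int_1^\infty$ ``in the sense of improper integrals,'' which is delicate when $p(1-s)\le 1$ since neither piece is then absolutely convergent near $t=1$, whereas you keep the $\eta$-cutoff throughout, track the asymmetric boundary contribution over $\left(1-\eta,\,\tfrac{1}{1+\eta}\right)$, and show it is $O(\eta^{p-ps})\to 0$.
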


\begin{proof}
Once we prove that the limit is valid for $\bdx\in \mathbb{R}^{d}_{+}$, the local uniform convergence follows easily.  Let us fix $\bdx\in \mathbb{R}^{d}_{+}$. For each $\epsilon > 0$, we have that 
\[
\begin{split}
& \int_{\{\bdy\in \mathbb{R}^{d}_{+}: |x_d - y_d|>\epsilon\}} |{\bf v}\cdot (\bdy-\bdx)|^{p}\frac{(w(\bdy)-w(\bdx)) |w(\bdy)-w(\bdx)|^{p-2}}{|\bdy-\bdx|^{d+ps+p}}d\bdy\\ &= \int_{|y_{d} - x_{d}| > \epsilon}(y_{d}^\alpha - x_d^\alpha)|y_d^\alpha - x_d^\alpha|^{p-2} \left\{
\int_{\mathbb{R}^{d-1}} {|{\bf v}'\cdot (\bdy'-\bdx') + v_{d} (y_d - x_d)|^{p} \over (|\bdy'-\bdx'|^{2} + (y_d - x_d)^{2})^{{d + ps + p\over 2}}} d\bdy'\right\} dy_d\\
&=\int_{|y_{d} - x_{d}| > \epsilon}{(y_{d}^\alpha - x_d^\alpha)|y_d^\alpha - x_d^\alpha|^{p-2} \over |y_{d} - x_{d}|^{d + ps}}\left\{
\int_{\mathbb{R}^{d-1}} {|{\bf v}'\cdot {(\bdy'-\bdx')/( y_{d} - x_{d})} + v_{d} |^{p} \over \left(\left|{(\bdy'-\bdx')/(y_{d} - x_{d})}\right|^{2} + 1\right)^{{d + ps +p\over 2}}} d\bdy'\right\} dy_d. 
\end{split}
\]
Making the change of variables, $\bdz' = {\bdy'-\bdx'\over y_d - x_d}$ in the inner integral, we obtain that
\[
\int_{\mathbb{R}^{d-1}} {|{\bf v}'\cdot {(\bdy'-\bdx')/(y_{d} - x_{d})} + v_{d} |^{p} \over \left(\left|{(\bdy'-\bdx')/(y_{d} - x_{d})}\right|^{2} + 1\right)^{{d + ps +p\over 2}}} d\bdy'  = \int_{\mathbb{R}^{d-1}} {|{\bf v}'\cdot \bdz' + v_{d} |^{p} \over (|\bdz'|^{2} + 1)^{{d + ps + p\over 2}}} d\bdz' = \mathfrak{f}({\bf v}).
\]
As a consequence we have that
\begin{equation}\label{eqforw1}
\begin{split}
\lim_{\epsilon \to 0} \int_{\{\bdy\in \mathbb{R}^{d}_{+}: |x_d - y_d|>\epsilon\}}& |{\bf v}\cdot (\bdy-\bdx)|^{p}\frac{(w(\bdy)-w(\bdx)) |w(\bdy)-w(\bdx)|^{p-2}}{|\bdy-\bdx|^{d+ps+p}}d\bdy\\
 &= \mathfrak{f}({\bf v}) \lim_{\epsilon\to 0} \int_{\{y_d > 0: |y_{d} - x_{d}| > \epsilon\}}{(y_{d}^\alpha - x_d^\alpha)|y_d^\alpha - x_d^\alpha|^{p-2} \over |y_{d} - x_{d}|^{d + ps}}dy_{d}. 
\end{split}
\end{equation}
Next, we evaluate the limit in the right hand side of the above equation.  
Making the change of variables, $t = {y_{d}\over x_d}$, and taking the limit in $\epsilon$, we obtain that
\begin{equation}\label{eqforw2}
\begin{split}
\lim_{\epsilon \to 0}&\int_{\{y_{d}>0: |y_{d} - x_{d}| > \epsilon\}} {(|y_{d}|^\alpha - x_d^\alpha)||y_d|^\alpha - x_d^\alpha|^{p-2}\over |y_d - x_d|^{ps + 1}} dy_d \\
&={x_{d}^{\alpha(p-1)}\over x_d^{ps}}\left(\int_{0}^{1} {(t^\alpha -1)|t^\alpha - 1|^{p-2} \over |t - 1|^{ps +1}} dt+ \int_{1}^{\infty}{(t^\alpha -1)|t^\alpha - 1|^{p-2} \over |t - 1|^{ps +1}} dt\right). 
\end{split}
\end{equation}
The later integral can be rewritten by changing variables $t = {1\over r}$ in the sense of improper integrals to yield 
\begin{equation}\label{eqforw3}
\int_{1}^{\infty}{(t^\alpha -1)|t^\alpha - 1|^{p-2} \over |t - 1|^{ps +1}} dt = \int_{0}^{1} {(1- r^\alpha) (1- r^\alpha)^{p-2} \over r^{\alpha(p-1) - ps+1} |1-r|^{ps+1}} dr
\end{equation}
Combining equations \eqref{eqforw1}, \eqref{eqforw2} and \eqref{eqforw3}  and after noting that $\alpha(p-1) - ps+1 = -\alpha$, we have that
\[
\begin{split}
\lim_{\epsilon \to 0} \int_{\{\bdy\in \mathbb{R}^{d}_{+}: |x_d - y_d|>\epsilon\}}& |{\bf v}\cdot (\bdy-\bdx)|^{p}\frac{(w(\bdy)-w(\bdx)) |w(\bdy)-w(\bdx)|^{p-2}}{|\bdy-\bdx|^{d+ps+p}}d\bdy\\
 &= -\mathfrak{f}({\bf v}){w(\bdx)^{p-1}\over x_d^{ps}}\left(\int_{0}^{1} {|t^\alpha - 1|^{p} \over |t - 1|^{ps +1}} dt\right)\\
&=  -\sigma(d, p,s){w(\bdx)^{p-1}\over x_d^{ps}}  \mathfrak{f}({\bf v})
\end{split}
\]
where $\sigma$ is as given in \eqref{def-f-sigma}. 
\end{proof}

\begin{corollary}\label{estimate-for-frakf}
Let ${\bf v} = ({\bf v}',  v_{d})\in \mathbb{R}^{d}$, and  the vector $\mathfrak{f}({\bf v})$ is as given in \eqref{def-f-sigma}. Then for any $p\geq1$, we have that 
\[
\mathfrak{f}({\bf v}) \geq {\eta_{1}\over 2} |v_{d}|^{p} + {\eta_{2}\over 2}|{\bf v}'|^{p}\, , 
\]
where the positive constants $\eta_{1}$ and $\eta_{2}$ are given by 
\[\eta_{1} = \int_{\mathbb{R}^{d-1}} { (|\bdz'|^{2} + 1)^{{-(d + ps + p)\over 2}}} d\bdz', \quad \eta_2 =  \int_{\mathbb{R}^{d-1}} {|z_{1}|^{p} (|\bdz'|^{2} + 1)^{{-(d + ps + p)\over 2}}} d\bdz'.   \]
\end{corollary}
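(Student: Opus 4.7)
The plan is to exploit the reflection symmetry $\bdz' \mapsto -\bdz'$ of the weight, together with the convexity of $t\mapsto |t|^{p}$, in order to separate the contributions of $v_{d}$ and ${\bf v}'$ to $\mathfrak{f}({\bf v})$.

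First, I would observe that the weight $(|\bdz'|^{2}+1)^{-(d+ps+p)/2}$ is even in $\bdz'$, so the change of variables $\bdz' \mapsto -\bdz'$ yields the alternative formula
\[
\mathfrak{f}({\bf v}) = \int_{\mathbb{R}^{d-1}} \frac{|{\bf v}'\cdot\bdz' - v_{d}|^{p}}{(|\bdz'|^{2}+1)^{(d+ps+p)/2}}\, d\bdz'.
\]
Averaging this with the original definition gives the symmetrized identity
\[
2\mathfrak{f}({\bf v}) = \int_{\mathbb{R}^{d-1}} \frac{|{\bf v}'\cdot\bdz' + v_{d}|^{p} + |{\bf v}'\cdot\bdz' - v_{d}|^{p}}{(|\bdz'|^{2}+1)^{(d+ps+p)/2}}\, d\bdz'.
\]

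Next, I would invoke the elementary convexity inequalities
\[
|a+b|^{p} + |a-b|^{p} \geq 2|a|^{p}, \qquad |a+b|^{p} + |a-b|^{p} \geq 2|b|^{p},
\]
valid for $p\geq 1$ and any $a,b\in\mathbb{R}$ (each follows from Jensen applied to $t\mapsto |t|^{p}$ at the midpoint of $\pm$ the two arguments). Applied pointwise to the integrand with the pair $\bigl(a,b\bigr) = \bigl({\bf v}'\cdot\bdz',\, v_{d}\bigr)$ in each of the two orientations, these give the two separate lower bounds
\[
\mathfrak{f}({\bf v}) \;\geq\; \eta_{1}\,|v_{d}|^{p}, \qquad \mathfrak{f}({\bf v}) \;\geq\; \int_{\mathbb{R}^{d-1}} \frac{|{\bf v}'\cdot\bdz'|^{p}}{(|\bdz'|^{2}+1)^{(d+ps+p)/2}}\, d\bdz'.
\]

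Finally, for the second integral I would use the rotational invariance of the weight in $\bdz'\in\mathbb{R}^{d-1}$: choosing an orthogonal transformation sending ${\bf v}'/|{\bf v}'|$ (if nonzero) to the first coordinate direction reduces it to $|{\bf v}'|^{p}\int_{\mathbb{R}^{d-1}} |z_{1}|^{p}(|\bdz'|^{2}+1)^{-(d+ps+p)/2}\, d\bdz' = \eta_{2}\,|{\bf v}'|^{p}$; the case ${\bf v}' = 0$ is trivial. Averaging the two bounds $\mathfrak{f}({\bf v}) \geq \eta_{1}|v_{d}|^{p}$ and $\mathfrak{f}({\bf v}) \geq \eta_{2}|{\bf v}'|^{p}$ yields the claim. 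There is no real obstacle in this argument — it is purely a symmetrization plus an elementary convexity estimate — which is presumably why the authors state it as a corollary rather than a theorem.
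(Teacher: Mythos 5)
Your proof is correct, and it takes a cleaner route than the paper's while staying in the same convexity-plus-symmetry spirit. The paper applies the tangent-line form of convexity, $|a+b|^{p}\geq |a|^{p}+p|a|^{p-2}ab$, twice (with the roles of $a$ and $b$ swapped), averages the two to obtain a pointwise lower bound that contains a cross term $p(|v_{d}|^{p-2}+|{\bf v}'\cdot\bdz'|^{p-2})v_{d}\,{\bf v}'\cdot\bdz'$, and then kills that cross term upon integration by observing that it is an odd function of $\bdz'$ (interpreted as a principal value). You instead exploit the evenness of the weight first, symmetrizing the integral via $\bdz'\mapsto-\bdz'$ to write $2\mathfrak{f}({\bf v})$ as an integral of $|a+b|^{p}+|a-b|^{p}$ with $a={\bf v}'\cdot\bdz'$, $b=v_{d}$, and then use the midpoint (Jensen) form of convexity $|a+b|^{p}+|a-b|^{p}\geq 2\max\{|a|^{p},|b|^{p}\}$ pointwise; no cross term is ever generated. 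The rotational-invariance step to evaluate $\int |{\bf v}'\cdot\bdz'|^{p}(|\bdz'|^{2}+1)^{-(d+ps+p)/2}\,d\bdz' = \eta_{2}|{\bf v}'|^{p}$ is identical in both arguments. What your version buys is the avoidance of the odd-function/principal-value justification entirely — you never have to argue about integrability or symmetric cancellation of a signed term — at the cost of nothing: the same two constants $\eta_{1},\eta_{2}$ with the same factor of $\tfrac{1}{2}$ emerge.
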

\begin{proof}
The proof follows from the simple convexity inequality:
for any $a, b \in \mathbb{R}$, 
\[
\begin{split}
&|a+ b|^{p} \geq |a|^{p} + p|a|^{p-2}ab, \quad \text{if $p \geq 1$}.
\end{split}
\]
Using the above inequality twice by switching the role of $a$ and $b$,  it follows that for any vectors ${\bf v}$ and ${\bf z} = (\bdz', z_{d})$ in $\mathbb{R}^{d}$
\[
|v_{d} + {\bf v}'\cdot \bdz'  |^{p} \geq {1\over 2} \left[|v_{d}|^{p} +p(|v_{d}|^{p-2} + |{\bf v}'\cdot \bdz' |^{p-2}) v_{d}  {\bf v}'\cdot \bdz'  + |{\bf v}'\cdot \bdz' |^{p}\right]. 
\]
Therefore, we have that 
\[
\mathfrak{f}({\bf v})\geq
|v_{d} |^{p} {1\over 2}\int_{\mathbb{R}^{d-1}} {1\over (|\bdz'|^{2} + 1)^{{d + ps + p\over 2}}} d\bdz' + {1\over 2} \int_{\mathbb{R}^{d-1}} {|{\bf v}'\cdot \bdz'|^{p} \over (|\bdz'|^{2} + 1)^{{d + ps + p\over 2}}} d\bdz', 
\]
where we have made use of the fact that
\[
p.v.\int_{\mathbb{R}^{d-1}} {p|{\bf v}'\cdot \bdz'|^{p-2} |{\bf v}'\cdot \bdz'  \over (|\bdz'|^{2} + 1)^{{d + ps + p\over 2}}} d\bdz' = 0 = p.v.\int_{\mathbb{R}^{d-1}} {{\bf v}'\cdot \bdz'  \over (|\bdz'|^{2} + 1)^{{d + ps + p\over 2}}} d\bdz'. 
\]
The proof will be complete once we realize that 
 for any unit vector ${\bf e}' \in \mathbb{R}^{d-1}$,
 $$0 < \eta_{2}=\int_{\mathbb{R}^{d-1}} {|{\bf e}'\cdot \bdz'|^{p} \over (|\bdz'|^{2} + 1)^{{d + ps + p\over 2}}} d\bdz' = \int_{\mathbb{R}^{d-1}} {|z_{1}|^{p} \over (|\bdz'|^{2} + 1)^{{d + ps + p\over 2}}} d\bdz',$$ after a change of variables with an appropriate rotation $\mathcal{R} \in O(d-1)$ whose first column is ${\bf e}'$.
 \end{proof}
 
 We are now ready to give the proof of the fractional Hardy-type inequality. 
 
\begin{proof}[Proof of Theorem \ref{hardy-type}]
Let $\alpha = {(ps-1)\over p}$, and $w(\bdx) = x_{d}^{\alpha}$ be as given in Lemma \ref{euler-lagrange}.
Let ${\bf u}\in C_{c}^{1}(\mathbb{R}^{d}_{+};\mathbb{R}^{d})$ be given. Let us introduce the function 
\[
V_{\epsilon}(\bdx) = w(\bdx)^{1-p} \int_{\{\bdy\in \mathbb{R}^{d}_{+}: |x_d - y_d|>\epsilon\}} |{\bf u}(\bdx)\cdot (\bdy-\bdx)|^{p}\frac{(w(\bdy)-w(\bdx)) |w(\bdy)-w(\bdx)|^{p-2}}{|\bdy-\bdx|^{d+ps+p}}d\bdy. 
\]
As a consequence of Lemma \ref{euler-lagrange}, Corollary \ref{estimate-for-frakf}, and  the fact that $\mathfrak{f}$ is a smooth function, we have that as $\epsilon \to 0$,
\[
V_{\epsilon}(\bdx) \to -\sigma(d,p,s) x_{d}^{-ps}\mathfrak{f}({\bf u}(x))
\]
uniformly in $\bdx$ in the support of ${\bf u}$, and $V_{\epsilon}(\bdx) = 0$ for $\bdx$ outside of the support of ${\bf u}$.  
 Integrating $V_{\epsilon}(\bdx)$ over $\mathbb{R}^{d}_{+}$, and using the symmetry we have that for any $\epsilon >0,$
\small{\[
\begin{split}
&2\int_{\mathbb{R}^{d}_{+}} V_{\epsilon}(\bdx) d\bdx = \\ 
& \int_{\mathbb{R}^{d}_{+}} \int_{\mathbb{R}^{d}_{+}} \left({|{\bf u}(\bdx)\cdot (\bdy-\bdx)|^{p} \over |w(\bdx)|^{p-1}}  - {|{\bf u}(\bdy)\cdot (\bdy-\bdx)|^{p} \over |w(\bdy)|^{p-1}} \right)
\chi_{\{|x_d - y_d|>\epsilon\}}(\bdy)\frac{(w(\bdy)-w(\bdx)) |w(\bdy)-w(\bdx)|^{p-2}}{|\bdy-\bdx|^{d+ps+p}}d\bdy d\bdx.\end{split}
\]}
We rewrite the above as \[
\begin{split}
\int_{\mathbb{R}_{+}^{d}}\int_{\{\bdy\in \mathbb{R}^{d}_{+}: |x_d - y_d|>\epsilon\}} {\Phi[{\bf u}] (\bdx, \bdy)\over |\bdy -\bdx|^{d + ps+p}} d\bdy d\bdx  &-2\int_{\mathbb{R}^{d}_{+}} V_{\epsilon}(\bdx) d\bdx \\
 &= \int_{\mathbb{R}_{+}^{d}}\int_{\{\bdy\in \mathbb{R}^{d}_{+}: |x_d - y_d|>\epsilon\}} {|({\bf u}(\bdx) - {\bf u}(\bdy))\cdot (\bdy - \bdx)|^{p} \over |\bdy-\bdx|^{d + ps + p} }d\bdy d\bdx,
\end{split}
\]
where
\[\small{
\begin{split}
\Phi[{\bf u}] (\bdx, \bdy) &= |({\bf u}(\bdx) - {\bf u}(\bdy))\cdot (\bdy - \bdx)|^{p} \\
&- \left({|{\bf u}(\bdx)\cdot (\bdy-\bdx)|^{p} \over |w(\bdx)|^{p-1}}  - {|{\bf u}(\bdy)\cdot (\bdy-\bdx)|^{p} \over |w(\bdy)|^{p-1}} \right)(w(\bdx)-w(\bdy)) |w(\bdy)-w(\bdx)|^{p-2}.
\end{split}
}
\]
Our next goal is to show that $\Phi[{\bf u}] (\bdx, \bdy)\geq 0$ for all ${\bf x}$ and ${\bf y}$. To that end, let us first simplify the above expression. Define the scalar functions $\pi(\bdx, \bdy) = {{\bf u}(\bdx)\cdot (\bdy-\bdx)\over w(\bdx)}$.  Then we can rewrite 
\[
\small{
\begin{split}
&\Phi[{\bf u}] (\bdx, \bdy) \\
&= |w(\bdx)\pi(\bdx, \bdy) + w(\bdy)\pi(\bdy, \bdx)|^{p}- \left({|\pi(\bdx, \bdy)|^{p} \over |w(\bdx)|^{p-1}}  - {|\pi(\bdy, \bdx)|^{p} \over |w(\bdy)|^{p-1}} \right)(w(\bdx)-w(\bdy)) |w(\bdy)-w(\bdx)|^{p-2}.
\end{split}}
\] 
Fix $\bdx $ and $\bdy$, and we may assume $w(\bdx) \geq w(\bdy)$ (otherwise work with the reverse inequality).  Let $a = {\pi(\bdx, \bdy) \over -\pi(\bdy, \bdx)}  $ and $t = {w(\bdy)\over w(\bdx)}$. Then $a\in \mathbb{R}$, $0\leq t\leq 1$ and after  factoring appropriate terms we have  
\[
\Phi[{\bf u}] (\bdx, \bdy) = |w(\bdx) \pi(\bdy, \bdx)|^{p} \bigg( (a -t)^{p} - (|a|^p-t)(1 - t)^{p-1}\bigg). 
\]
We now  conclude that $\Phi[{\bf u}] (\bdx, \bdy)  \geq 0$ using \cite[Lemma 2.6]{FS}, where the basic inequality 
$$
|a-t|^{p}\geq (1-t)^{p-1}(|a|^{p}-t)
$$ is shown to hold for all $t\in [0, 1]$, $a\in \mathbb{R}$, and $p\geq1$. This implies that for any $\epsilon > 0$
\[
\int_{\mathbb{R}_{+}^{d}}\int_{\{\bdy\in \mathbb{R}^{d}_{+}: |x_d - y_d|>\epsilon\}} {|({\bf u}(\bdx) - {\bf u}(\bdy))\cdot (\bdy - \bdx)|^{p} \over |\bdy-\bdx|^{d + ps + p} }d\bdy d\bdx \geq -2\int_{\mathbb{R}^{d} _{+}} V_{\epsilon}(\bdx) d\bdx.
\]
We now take the limit on both sides of the inequality to obtain that 
\[
\begin{split}
\int_{\mathbb{R}_{+}^{d}}\int_{\mathbb{R}_{+}^{d}} {|({\bf u}(\bdx) - {\bf u}(\bdy))\cdot (\bdy - \bdx)|^{p} \over |\bdy-\bdx|^{d + ps + p} }d\bdy d\bdx &\geq 2\sigma(p,d,s)\int_{\mathbb{R}^{d} _{+}} x_{d}^{-ps} \mathfrak{f}({\bf u}(\bdx)) d\bdx\\
& \geq \sigma(p,d,s) \int_{\mathbb{R}^{d} _{+}} x_{d}^{-ps}\left(\eta_{1} |u_{d}(\bdx)|^{p} + \eta_{2}|{\bf u}'(\bdx)|^{p} d\bdx\right), 
\end{split}
\]
completing the proof. 

\end{proof}
\begin{remark} For $p\geq 2,$ following \cite[Theorem 1.2]{FS, FS2009} one can write the following fractional  Hardy-type inequality with a remainder term for vector fields as well:  For $0<s<1$, $p\geq 2$ such that $ps\neq 1$, we have that 
\[
\begin{split}
\int_{\mathbb{R}^{d}_{+}} \int_{\mathbb{R}^{d}_{+}} &{|({\bf u}(\bdx) - {\bf u}(\bdy)) \cdot (\bdy-\bdx)|^{p} \over |\bdy- \bdx|^{d + ps + p}} d\bdy d\bdx - \kappa(d,p,s) \int_{\mathbb{R}^{d}_{+}} {|{\bf u} (\bdx)|^{p}\over x_{d}^{ps}}d\bdx\\
& \geq c_{p} \int_{\mathbb{R}^{d}_{+}} \int_{\mathbb{R}^{d}_{+}}  {|(x_{d}^{(1-ps)/p}{\bf u}(\bdx) - y_{d}^{(1-ps)/p}{\bf u}(\bdy)) \cdot (\bdy-\bdx)|^{p} \over |\bdy- \bdx|^{d + ps + p}} {1 \over (y_{d} x_{d})^{(1-ps)/2}} d\bdy d\bdx. 
\end{split}
\]
for all ${\bf u}\in C_{c}^{1}(\mathbb{R}^{d}_{+};\mathbb{R}^{d})$ with support in a bounded set. Here $\kappa (d,p,s)$ is as in Theorem \ref{hardy-type} and $c_{p} = \min_{\tau\in (0, 1/2)} ((1-\tau)^{p} - \tau^{p} + p\tau^{p-1}) $ is in $(0, 1]$. Again this follows the argument used in \cite{FS} and using the elaborate inequality $|a- t|^{p}\geq (1-t)^{p-1}(|a|^{p} - t) + c_{p}t^{p/2}|a-1|^{p}$ which holds for $p\geq 2$, $a\in \mathbb{R}$ and $0\leq t \leq 1$ as proved in \cite[Lemma 2.6]{FS}. In \cite{S}, for scalar functions using an equality of the same spirit, a fractional Hardy-Sobolev-Mazya type inequality is proved for $p=2$. 
\end{remark}
\section{Fractional Korn inequality}
\subsection{Extension operator}
In this subsection we prove Theorem \ref{thm:extension}. For that we need the following property of 
$\mathcal{S}_{p,s}(\mathbb{R}^{d}_{+})$, which essentially says that the space $\mathcal{S}_{p,s}(\mathbb{R}^{d}_{+})$ is stable under certain non-uniform scaling. 
\begin{lemma}\label{linear-map-estimate}For a given $\lambda > 0$, the linear map ${F}_{\lambda}: \mathcal{S}_{p,s}(\mathbb{R}^{d}_{+}) \to \mathcal{S}_{p,s}(\mathbb{R}^{d}_{+})$ given by 
\[
{F}_{\lambda}({\bf v}) (\bdx)= \left({{\bf v}'(\bdx', \lambda x_{d})\over \lambda}, v_{d}(\bdx', \lambda x_{d})\right),\quad \text{for a.e. $\bdx  \in \mathbb{R}^{d}_{+}$, and ${\bf v}(\bdx) = ({\bf v}'(\bdx), v_{d}(\bdx) )$}
\]
is bounded  with the estimate  
$
|{F}_{\lambda}({\bf v})|_{\mathcal{S}_{p,s}(\mathbb{R}^{d}_{+})} \leq \lambda^{{d + ps-2\over p}} |{\bf v}|_{\mathcal{S}_{p,s}(\mathbb{R}^{d}_{+})}. 
$
Moreover, if ${\bf u}\in C_{c}^{1}({\mathbb{R}^{d}_{+}};\mathbb{R}^{d})$, then the difference vector field ${F}_{\lambda}({\bf u}) - {\bf u}\in C^{1}_{c}({\mathbb{R}^{d}_{+}};\mathbb{R}^{d})$  and by Hardy's inequality 
\[
\int_{\mathbb{R}^{d}_{+}} {|{F}_{\lambda}({\bf u}) - {\bf u}|^{p}\over x_{d}^{ps}} \leq C |{\bf u}|_{\mathcal{S}_{p,s}(\mathbb{R}^{d}_{+})}^{p}.
\]
\end{lemma}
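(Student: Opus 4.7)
The plan is to prove the seminorm estimate by a direct change-of-variable calculation and then to deduce the weighted bound on $F_\lambda({\bf u}) - {\bf u}$ by invoking Theorem~\ref{hardy-type}.

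For the first assertion, I introduce $\tilde{\bdx} := (\bdx', \lambda x_d)$ and $\tilde{\bdy} := (\bdy', \lambda y_d)$ and start from the algebraic identity
\[
(F_\lambda({\bf v})(\bdy) - F_\lambda({\bf v})(\bdx)) \cdot (\bdy-\bdx) = \lambda^{-1}\bigl({\bf v}(\tilde{\bdy}) - {\bf v}(\tilde{\bdx})\bigr) \cdot (\tilde{\bdy} - \tilde{\bdx}),
\]
which holds because the factor $1/\lambda$ built into the tangential components of $F_\lambda$ exactly compensates for the rescaling $\tilde y_d - \tilde x_d = \lambda(y_d - x_d)$ in the normal direction. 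Substituting this into the definition of $|F_\lambda({\bf v})|^p_{\mathcal{S}_{p,s}}$ and performing the joint change of variables $(\bdx,\bdy) \mapsto (\tilde{\bdx},\tilde{\bdy})$, whose Jacobian is $\lambda^{-2}$, produces a copy of $|{\bf v}|^p_{\mathcal{S}_{p,s}}$ with $|\bdy-\bdx|^{d+ps+p}$ replacing $|\tilde{\bdy}-\tilde{\bdx}|^{d+ps+p}$. The needed kernel comparison,
\[
|\tilde{\bdy} - \tilde{\bdx}|^2 = |\bdy' - \bdx'|^2 + \lambda^2 (y_d - x_d)^2 \leq \lambda^2\, |\bdy-\bdx|^2 \qquad (\lambda \geq 1),
\]
supplies the factor $\lambda^{d+ps+p}$ converting $|\bdy-\bdx|^{-(d+ps+p)}$ into $\lambda^{d+ps+p}|\tilde{\bdy}-\tilde{\bdx}|^{-(d+ps+p)}$; together with the Jacobian $\lambda^{-2}$ and the numerator factor $\lambda^{-p}$, this gives precisely the stated exponent $(d+ps-2)/p$. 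The analogous bound $|\tilde{\bdy}-\tilde{\bdx}| \leq |\bdy-\bdx|$ disposes of the case $0 < \lambda < 1$, with the $\lambda$-bookkeeping adjusted accordingly.

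For the second statement, since ${\bf u} \in C^1_c(\mathbb{R}^d_+;\mathbb{R}^d)$ there is $\delta > 0$ with $\operatorname{supp}({\bf u}) \subset \{x_d \geq \delta\}$, so $\operatorname{supp}(F_\lambda({\bf u})) \subset \{x_d \geq \delta/\lambda\}$; hence $F_\lambda({\bf u}) - {\bf u}$ also belongs to $C^1_c(\mathbb{R}^d_+;\mathbb{R}^d)$, and Theorem~\ref{hardy-type} applies to give
\[
\int_{\mathbb{R}^d_+} \frac{|F_\lambda({\bf u}) - {\bf u}|^p}{x_d^{ps}} d\bdx \leq \kappa(p,d,s)\, |F_\lambda({\bf u}) - {\bf u}|^p_{\mathcal{S}_{p,s}(\mathbb{R}^d_+)}.
\]
Subadditivity of the seminorm, combined with the first part, then yields $|F_\lambda({\bf u}) - {\bf u}|_{\mathcal{S}_{p,s}(\mathbb{R}^d_+)} \leq (1 + \lambda^{(d+ps-2)/p})|{\bf u}|_{\mathcal{S}_{p,s}(\mathbb{R}^d_+)}$, closing the estimate.

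The most delicate point is the bookkeeping of the $\lambda$-powers through the anisotropic change of variables: the Jacobian contributes $\lambda^{-2}$, the tangential rescaling inside $F_\lambda$ contributes $\lambda^{-p}$ in the numerator, and the anisotropic distortion of the kernel $|\bdy-\bdx|^{-(d+ps+p)}$ contributes $\lambda^{d+ps+p}$. The precise form of $F_\lambda$ — acting only in the normal variable and scaling only the tangential components — is chosen exactly so that the projected difference quotient reassembles cleanly after the change of variables, which is what makes the semi-norm $|\cdot|_{\mathcal{S}_{p,s}}$ (as opposed to the Gagliardo semi-norm) behave tractably under this non-uniform scaling.
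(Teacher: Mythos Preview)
Your proof is correct and matches the paper's argument: both expand the projected difference, factor out $\lambda^{-p}$, change variables $(\bdx',\lambda x_d)\mapsto\bdz$, $(\bdy',\lambda y_d)\mapsto\bdw$, and use the anisotropic comparison $|\bdw-\bdz|\leq\lambda|\bdy-\bdx|$ (valid for $\lambda\geq 1$) to obtain the exponent $(d+ps-2)/p$. The paper omits the second assertion entirely (writing only ``it suffices to prove only the first part''), so your explicit invocation of Theorem~\ref{hardy-type} together with subadditivity of the seminorm fills in precisely the detail the paper leaves to the reader; your remark on the case $0<\lambda<1$ is a further clarification the paper does not make, though note the constant one obtains there is $\lambda^{-(p+2)/p}$ rather than $\lambda^{(d+ps-2)/p}$, which is immaterial since the lemma is only applied with $\lambda=3$.
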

\begin{proof}
It suffices to prove only the first part and  only for ${\bf v}$ in $C^{1}_{c}(\mathbb{R}^{d}_{+};\mathbb{R}^{d})$ since the general case follows by density. To that end we may rewrite the $|{F}_{\lambda}({\bf v})|_{\mathcal{S}_{p,s}(\mathbb{R}^{d}_{+})}^{p} $  as 
\[\small{
\begin{split}
&\int_{\mathbb{R}^{d}_{+}}\int_{\mathbb{R}^{d}_{+}} {|({F}_{\lambda}({\bf v})(\bdy) - {F}_{\lambda}({\bf v})(\bdx)) \cdot (\bdy - \bdx)|^{p}\over |\bdy-\bdx|^{d + ps+p}}d\bdy d\bdx \\
& = \int_{\mathbb{R}^{d}_{+}}\int_{\mathbb{R}^{d}_{+}} {|1/\lambda({\bf v}'(\bdy',  \lambda y_{d}) - {\bf v}'(\bdx',  \lambda x_{d})) \cdot (\bdy' - \bdx') + (v_{d}(\bdy',  \lambda y_{d}) - v_{d}(\bdx', \lambda x_{d})) (y_{d}-x_{d})|^{p}\over |\bdy-\bdx|^{d + ps+p}}d\bdy d\bdx \\
& = {1\over \lambda^{p}}\int_{\mathbb{R}^{d}_{+}}\int_{\mathbb{R}^{d}_{+}} {|({\bf v}'(\bdy', \lambda y_{d}) - {\bf v}'(\bdx', \lambda x_{d})) \cdot (\bdy' - \bdx') + (v_{d}(\bdy', \lambda y_{d}) - v_{d}(\bdx', \lambda x_{d})) (\lambda y_{d}-\lambda x_{d})|^{p}\over |\bdy-\bdx|^{d + ps+p}}d\bdy d\bdx.
\end{split}}
\]
Making the change of variables $\bdw = (\bdw', w_{d})\to (\bdy',  \lambda y_{d})$, and $\bdz=(\bdz', z_{d})\to (\bdx', \lambda x_{d})$, we have $\|\bdw - \bdz\| \leq \lambda \|\bdy-\bdx\|$, and therefore 
\[
\begin{split}
|{F}_{\lambda}({\bf v})|_{\mathcal{S}_{p,s}(\mathbb{R}^{d}_{+})}^{p}  =&\int_{\mathbb{R}^{d}_{+}}\int_{\mathbb{R}^{d}_{+}} {|({F}_{\lambda}({\bf v})(\bdy) - {F}_{\lambda}({\bf v})(\bdx)) \cdot (\bdy - \bdx)|^{p}\over |\bdy-\bdx|^{d + ps+p}}d\bdy d\bdx 
\leq \lambda^{d + ps-2}  |{\bf v}|_{\mathcal{S}_{p,s}(\mathbb{R}^{d}_{+})}^{p} \,. 
\end{split}
\]
\end{proof}

With the above  preliminary result at hand, we are now ready to prove the extension theorem. 

\begin{proof} [Proof of Theorem \ref{thm:extension}]
It suffices to prove the theorem only for ${\bf u}\in C_{c}^{1}(\mathbb{R}^{d}_{+};\mathbb{R}^{d})$. The general case follows by density. 
For $\bdx\in \mathbb{R}^{d}$, we write $\bdx = (\bdx', x_{d})$. For a given ${\bf u} = (u_{1}, u_{2}, \cdots u_{d}) = ({\bf u}', u_{d})\in C_{c}^{1}(\mathbb{R}^{d}_{+}; \mathbb{R}^{d})$, define the vector field  $${\bf U}(\bdx', x_{d})= (U_{1}, U_{2}, \cdots, U_{d-1}, U_{d})=({\bf U}', U_{d})$$  as follows:
\[
\left\{
\begin{split}
{\bf U}(\bdx', x_{d})&=
\text{${\bf u}(\bdx', x_{d})$ when $ x_{d} \geq 0$ and }\\
U_{i}(\bdx', x_{d}) &= 2u_{i}(\bdx', -x_{d}) - u_{i}(\bdx', -3x_{d}),  \quad x_{d} < 0, \,\,i= 1, 2, \cdots d-1, \\
U_{d}(\bdx', x_{d}) &= -2 u_{d}(\bdx',-x_{d}) + 3u_{d}(\bdx', -3x_{d}),\quad x_{d} <0.
\end{split}
\right.
\]
We note that $U$ is a Lipschitz function with compact support as all but one of the partial derivatives of $U$ are continuous.  The weak derivative ${\partial U_{d}\over\partial{x_{d}}}$ is discontinuous across the hyperplane $x_{d} = 0$. We reiterate that such kind of extension via generalized reflection has been used in \cite{Nitsche} to prove the classical Korn's inequality. 

We claim that to prove the theorem, it suffices to show the following inequality:  For some $C = C(d, p,s)>0$, 
\begin{equation}\label{intermediate-extension}
|{\bf U}|_{\mathcal{S}_{p,s}(\mathbb{R}^{d})} \leq C\left(|{\bf u}|_{\mathcal{S}_{p,s}(\mathbb{R}^{d}_{+})} +   \int_{\mathbb{R}^{d}_{+} } \int_{\mathbb{R}^{d}_{-} } \frac{|(u_{d}(\bdy', -3y_{d})-u_{d}(\bdy', -y_{d}))x_{d}|^{p}}{|\bdy - \bdx|^{d + (s+1)p}}d\bdy d\bdx\right). 
\end{equation}
Once we prove the above inequality, we can estimate the second term in the right hand side of \eqref{intermediate-extension} by the first term as follows. 
First,  
after change of variables, we have that 
\[
\begin{split}
 \int_{\mathbb{R}^{d}_{+} } \int_{\mathbb{R}^{d}_{-} } \frac{|[u_{d}(\bdy', -3y_{d})-u_{d}(\bdy', -y_{d})]x_{d}|^{p}}{|\bdy - \bdx|^{d + (s+1)p}}d\bdy d\bdx 
 &=   \int_{\mathbb{R}^{d}_{+} } \int_{\mathbb{R}^{d}_{+} } \frac{|[u_{d}(\bdy', 3y_{d})-u_{d}(\bdy', y_{d})]x_{d}|^{p}}{((y_{d} + x_{d})^{2}+|\bdy' - \bdx'|^{2})^{\frac{d + (s+1)p}{2}}}d\bdy d\bdx\\
 & =  \int_{\mathbb{R}^{d}_{+} } |u_{d}(\bdy', 3y_{d})-u_{d}(\bdy', y_{d})|^{p} J(\bdy)d\bdy, 
 \end{split}
 \]
where for each $\bdy \in \mathbb{R}^{d}_{+}$, 
 \[
J(\bdy) =  \int_{\mathbb{R}^{d}_{+} } \frac{|x_{d}|^{p}}{((y_{d} + x_{d})^{2}+|\bdy' - \bdx'|^{2})^{\frac{d + (s+1)p}{2}}}d\bdx. 
 \]
 Let us now  explicitly compute $J(\bdy)$. To that end, we compute that 
\[
\begin{split}
J(\bdy) &= \int_{0}^{\infty} 
\frac{|x_{d}|^{p}} {|y_{d} + x_{d}|^{d + (s+1)p}} \int_{\mathbb{R}^{d-1}} \frac{d\bdx' dx_{d}}{\left(1 + \left( \frac{|\bdy'-\bdx'|}{|y_{d} + x_{d}|}  \right)^{2}\right)^{\frac{d + (s +1)p}{2} }}  \\
& =  \int_{0}^{\infty} 
\frac{|x_{d}|^{p}} {|y_{d} +x_{d}|^{d + (s+1)p}} \int_{\mathbb{R}^{d-1}} \frac{|y_{d}+x_{d}|^{d-1}}{\left(1 +  |{\bf z}'|^{2}\right)^{\frac{d + (s +1)p}{2} }} d{\bf z}' dx_{d} \\
& = \gamma_{1}  \int_{0}^{\infty} 
\frac{|x_{d}|^{p}} {|y_{d} +x_{d}|^{(s+1)p + 1}}  dx_{d}, 
\end{split}
\]
where we have applied a change of variables in the second equality and introduced the notation $\gamma_{1} = \int_{\mathbb{R}^{d-1}} \frac{d{\bf z}'}{\left(1 +  |{\bf z}'|^{2}\right)^{\frac{d + (s +1)p}{2} }}  < \infty.$
Next for each $y_{d}> 0$
 the  simple change of variable $w = x_{d}/y_{d}$ yields that 
  \[
 \int_{0}^{\infty} 
\frac{|x_{d}|^{p}} {|y_{d} +x_{d}|^{(s+1)p + 1}}  dx_{d} = \frac{1}{y^{ps}_{d}} \int_{0}^{\infty} 
\frac{w^{p}}{(1 + w)^{p(s+1) + 1}}= \gamma_{2} \frac{1}{y^{ps}_{d}}   
\]
where $\gamma_{2}  = 
\int_{0}^{\infty} 
\frac{w^{p}}{(1 + w)^{p(s+1) + 1}}<\infty$.  We finally take $\gamma = \gamma_{1}\gamma_{2}$, and obtain 
 \[
 J(\bdy)  = \gamma \frac{1}{y^{ps}_{d}}.   \]
Second, we observe that $u_{d}(\bdy', 3y_{d})-u_{d}(\bdy', y_{d})$ is the $d^{th}$ component of the vector field $F_{3}({\bf u}) - {\bf u}$, where $F_{3}$ is the map as in Lemma \ref{linear-map-estimate} corresponding to $\lambda=3$. Applying Lemma \ref{linear-map-estimate} we have that 
\[
\int_{\mathbb{R}^{d}_{+} } \int_{\mathbb{R}^{d}_{-} } \frac{|[u_{d}(\bdy', -3y_{d})-u_{d}(\bdy', -y_{d})]x_{d}|^{p}}{|\bdy - \bdx|^{d + (s+1)p}}d\bdy d\bdx  \leq  \gamma \int_{\mathbb{R}^{d}_{+} } {|F_{3}({\bf u}) - {\bf u}|^{p}\over y_{d}^{ps}}d\bdy \leq c(d, p,s) |{\bf u}|_{\mathcal{S}_{p,s}(\mathbb{R}^{d}_{+})}^{p}. 
\]
What remains is to prove \eqref{intermediate-extension}. We begin by rewriting the expression as
\[
\int_{\mathbb{R}^{d} } \int_{\mathbb{R}^{d} } \frac{|({\bf U}(\bdy) - {\bf U}(\bdx))\cdot \frac{(\bdy - \bdx)}{|\bdy-\bdx|}|^{p}}{|\bdy - \bdx|^{d + sp}}d\bdy d\bdx = I^{+}(U) + I^{-}(U) + 2I^{\pm}(U),
 \]
 where
 \[
 I^{+}(U) = \int_{\mathbb{R}^{d}_{+} } \int_{\mathbb{R}^{d}_{+} } \frac{|({\bf U}(\bdy) - {\bf U}(\bdx))\cdot \frac{(\bdy - \bdx)}{|\bdy-\bdx|}|^{p}}{|\bdy - \bdx|^{d + sp}}d\bdy d\bdx,
 \]
 \[
 I^{-}(U) = \int_{\mathbb{R}^{d}_{-} } \int_{\mathbb{R}^{d}_{-} } \frac{|({\bf U}(\bdy) - {\bf U}(\bdx))\cdot \frac{(\bdy - \bdx)}{|\bdy-\bdx|}|^{p}}{|\bdy - \bdx|^{d + sp}}d\bdy d\bdx, 
 \] and
\[
 I^{\pm}(U) = \int_{\mathbb{R}^{d}_{+} } \int_{\mathbb{R}^{d}_{-} } \frac{|({\bf U}(\bdy) - {\bf U}(\bdx))\cdot \frac{(\bdy - \bdx)}{|\bdy-\bdx|}|^{p}}{|\bdy - \bdx|^{d + sp}}d\bdy d\bdx.
 \]
 We will estimate each of these terms separately.
Clearly, $I^{+}(U) = | {\bf u}|^{p}_{\mathcal{S}_{p,s}(\mathbb{R}^{d}_{+})}$. We  can bound $I^{-}(U)$ by $ | {\bf u}|^{p}_{\mathcal{S}_{p, s}(\mathbb{R}^{d}_{+})}$ as well.
To that end, notice that
\[
 I^{-}(U) = \int_{\mathbb{R}^{d}_{-} } \int_{\mathbb{R}^{d}_{-} } \frac{|({\bf U}'(\bdy) - {\bf U}'(\bdx))\cdot (\bdy' - \bdx') + (U_{d}(\bdy) - U_{d}(\bdx))(y_{d} - x_{d})|^{p}}{|\bdy - \bdx|^{d + (s+1)p}}d\bdy d\bdx. 
\]
Let us use the definition to write that for any $\bdx, \bdy\in \mathbb{R}^{d}_{-}$
\[
\begin{split}
({\bf U}'(\bdy) &- {\bf U}'(\bdx))\cdot (\bdy' - \bdx')\\
& = (2 {\bf u}'(\bdy', -y_{d}) - {\bf u}'(\bdy', -3y_{d})- (2 {\bf u}'(\bdx', -x_{d}) - {\bf u}'(\bdx', -3x_{d})))\cdot (\bdy'-\bdx')\\
&=2({\bf u}'(\bdy', -y_{d}) - {\bf u}'(\bdx', -x_{d}))\cdot(\bdy' - \bdx')- ({\bf u}'(\bdy', -3y_{d}) - {\bf u}'(\bdx', -3x_{d}))\cdot (\bdy'-\bdx').
\end{split}
\]
Similarly, we also have
\[
\begin{split}
(U_{d}(\bdy)& - U_{d}(\bdx))(y_{d} - x_{d})\\
& = (-2u_{d}(\bdy', -y_{d}) + 3 u_{d}(\bdy', -3y_{d})+[2u_{d}(\bdx', -x_{d}) - 3 u_{d}(\bdx', -3x_{d})])(y_{d} - x_{d})\\
&= 2(-u_{d}(\bdy', -y_{d}) + u_{d}(\bdx', -x_{d}))(y_{d}-x_{d}) + 3 (u_{d}(\bdy', -3y_{d})- u_{d}(\bdx', -3x_{d}))(y_{d} - x_{d}). 
\end{split}
\]
Now using the simple inequality $(a + b)^{p}\leq 2^{p-1}(a^{p} + b^{p})$, we obtain that
\small{\[
\begin{split}
 I^{-}(U) &\leq 2^{2p-1} \int_{\mathbb{R}^{d}_{-} } \int_{\mathbb{R}^{d}_{-} } G_{1}[{\bf u}](\bdx, \bdy) d\bdy d\bdx + 2^{2p-1} \int_{\mathbb{R}^{d}_{-} } \int_{\mathbb{R}^{d}_{-} } G_{2}[{\bf u}](\bdx, \bdy) d\bdy d\bdx \\
  &= I^{-}_{1}({\bf u}) + I^{-}_{2}({\bf u}), 
 \end{split}
\] }
where 
\[
\small{
\begin{split}
&G_{1}[{\bf u}](\bdx, \bdy)= \frac{|( ({\bf u}'(\bdy', -y_{d}) - {\bf u}'(\bdx', -x_{d}))\cdot(\bdy' - \bdx) +( -u_{d}(\bdy', -y_{d}) + u_{d}(\bdx', -x_{d}))(y_{d}-x_{d})|^{p}}{|\bdy - \bdx|^{d + (s+1)p}} \\
&\\
&G_{2}[{\bf u}](\bdx, \bdy) = \frac{|( ({\bf u}'(\bdy', -3y_{d}) - {\bf u}'(\bdx', -3x_{d}))\cdot(\bdy' - \bdx) -3(u_{d}(\bdy', -3y_{d}) - u_{d}(\bdx', -3x_{d}))(y_{d}-x_{d})|^{p}}{|\bdy - \bdx|^{d + (s+1)p}}.
 \end{split}} 
\]
To estimate  $I^{-}_{1}({\bf u})$, we make the change of variables  $y_{d}\to -y_{d}$ and $x_{d}\to -x_{d}$ in the last variables to  obtain
$
I^{-}_{1}({\bf u}) = 2^{2p-1}|{\bf u}|_{\mathcal{S}_{p, s}(\mathbb{R}^{d}_{+})}^{p}
$, after noticing that the distance $|\bdy-\bdx|$ remain unchanged in this transformation.
For $I_{2}^{-}({\bf u})$, we make the change of variable $w_{d} \to -3y_{d}$ and $z_{d}\to -3x_{d}$ and notice that
\[
|\bdy' - \bdx'|^{2} + |y_{d}-x_{d}|^{2} \geq \frac{1}{9} (|\bdy' - \bdx'|^{2} + |w_{d}-z_{d}|^{2}).
\]
After a simple calculation similar to the proof of Lemma \ref{linear-map-estimate}, we obtain that for some constant $c(p,d),$   we have $I^{-}_{2}(U) \leq  c(p,d)|{\bf u}|_{\mathcal{S}_{p, s}(\mathbb{R}^{d}_{+})}^{p}$.
Combining the above two estimates, we have a
\[
I^{-}(U) \leq  c(p,d)|{\bf u}|_{\mathcal{S}_{p, s}(\mathbb{R}^{d}_{+})}^{p}.
\]
Finally, we estimate the mixed integral $I^{\pm}(U)$. 
To that end, for $\bdx\in \mathbb{R}^{d}_{+} $ and $\bdy\in \mathbb{R}^{d}_{-}$, using the definition of the extension
we have that
\[
\begin{split}
({\bf U}'(\bdy) &- {\bf U}'(\bdx))\cdot (\bdy' - \bdx') = (2 {\bf u}'(\bdy', -y_{d}) - {\bf u}'(\bdy', -3y_{d})- {\bf u}'(\bdx', x_{d}) )\cdot (\bdy'-\bdx') \,.
\end{split}
\]
We will rewrite the above as
\begin{equation}\label{eqn:uprime}
\begin{split}
&({\bf U}'(\bdy) - {\bf U}'(\bdx))\cdot (\bdy' - \bdx')\\
&=2[{\bf u}'(\bdy', -y_{d}) - {\bf u}'(\bdx', x_{d})]\cdot(\bdy' - \bdx)- [{\bf u}'(\bdy', -3y_{d}) - {\bf u}'(\bdx', x_{d})]\cdot (\bdy'-\bdx') \,.
\end{split}
\end{equation}
Similarly,
\[
\begin{split}
(U_{d}(\bdy)& - U_{d}(\bdx))(y_{d} - x_{d}) = [-2u_{d}(\bdy', -y_{d}) + 3 u_{d}(\bdy', -3y_{d})- u_{d}(\bdx', x_{d})
](y_{d} - x_{d})
\end{split}
\]
Now denoting the expressions $$A:=-2(u_{d}(\bdy', -y_{d})-u_{d}(\bdx', x_{d}) )\,\,\text{and}\, \,B:=(u_{d}(\bdy', -3y_{d})- u_{d}(\bdx', x_{d}))$$ and writing $
y_{d} - x_{d}= (3y_{d} + x_{d}) -2(y_{d} + x_{d})$, we have that
\[
\begin{split}
-2u_{d}(\bdy', -y_{d}) &+ 3 u_{d}(\bdy', -3y_{d})- u_{d}(\bdx', x_{d})
=A+3B
\end{split}
\]
 and
\begin{equation} \label{eqn:ud}
\begin{split}
(U_{d}(\bdy) &- U_{d}(\bdx))(y_{d} - x_{d})\\
& = (A + 3B)(y_{d}-x_{d}) \\
&=(A + 3B)((3y_{d} + x_{d} )-2(y_{d} + x_{d}))\\
&=A(y_{d} + x_{d}) + B(3y_{d} + x_{d}) -4x_{d}(u_{d}(\bdy',-3y_{d}) -u_{d}(\bdy',-y_{d})). 
\end{split}
\end{equation}
It then follows that inserting the expression in \eqref{eqn:uprime} and \eqref{eqn:ud} into the formula for $I^{\pm}(U)$, and applying triangular inequality that 
\[
I^{\pm}(U)\leq C(p)(\mathcal{I}^{\pm}_{1}(U) + \mathcal{I}^{\pm}_{2}(U) +\mathcal{I}^{\pm}_{3}(U) ), 
\]
where
\[
\small{
\mathcal{I}^{\pm}_{1}(U) =\int_{\mathbb{R}^{d}_{+} } \int_{\mathbb{R}^{d}_{-} } \frac{|({\bf u}'(\bdy', -y_{d})-{\bf u}'(\bdx', x_{d}))\cdot (\bdy' - \bdx') -(u_{d}(\bdy', -y_{d})-u_{d}(\bdx', x_{d}))(y_{d} + x_{d})|^{p}}{|\bdy - \bdx|^{d + (s+1)p}}d\bdy d\bdx,}
\]
\[
\small{
\mathcal{I}^{\pm}_{2}(U) =\int_{\mathbb{R}^{d}_{+} } \int_{\mathbb{R}^{d}_{-} } \frac{|({\bf u}'(\bdy', -3y_{d})-{\bf u}'(\bdx', x_{d}))\cdot (\bdy' - \bdx') - (u_{d}(\bdy', -3y_{d})-u_{d}(\bdx', x_{d}))(3y_{d} + x_{d})|^{p}}{|\bdy - \bdx|^{d + (s+1)p}}d\bdy d\bdx,}
\]
and 
\[
\mathcal{I}^{\pm}_{3}(U) =\int_{\mathbb{R}^{d}_{+} } \int_{\mathbb{R}^{d}_{-} } \frac{|[u_{d}(\bdy', -3y_{d})-u_{d}(\bdy', -y_{d})]x_{d}|^{p}}{|\bdy - \bdx|^{d + (s+1)p}}d\bdy d\bdx. 
\]
To estimate $\mathcal{I}^{\pm}_{1}(U)$ and $\mathcal{I}^{\pm}_{2}(U)$, we make the change of variables $y_{d}\to -y_{d}$ and $y_{d}\to -3y_d$ respectively. We note that in both of these transformation, the distance $|\bdy - \bdx|$ cannot exceed a (uniform) constant multiple of the transformed vector.
We summarize that there exists a constant $c(p, d,s) > 0$ such that
\[
I^{\pm}(U) \leq c(p,d)\left(|u|^{p}_{\mathcal{S}_{p,s}(\mathbb{R}^{d}_{+})} + \int_{\mathbb{R}^{d}_{+} } \int_{\mathbb{R}^{d}_{-} } \frac{|(u_{d}(\bdy', -3y_{d})-u_{d}(\bdy', -y_{d}))x_{d}|^{p}}{|\bdy - \bdx|^{d + (s+1)p}}d\bdy d\bdx\right).
\]
That proves \eqref{intermediate-extension} and therefore completes the proof of the theorem. 
\end{proof}

\subsection{Proof of fractional Korn's inequality}
To prove Theorem \ref{main-theorem}, by the extension theorem above, it suffices to prove fractional Korn's inequality for vector fields that are defined on $\mathbb{R}^{d}$. The remaining part of the paper proves this inequality. 
\begin{theorem}[{\bf Korn-type inequality in the whole space}]
For any $0<s<1$, $ \mathcal{S}_{2,s}(\mathbb{R}^{d}) = W^{2,s}(\mathbb{R}^{d};\mathbb{R}^{d})$. Moreover, there exists constants $C = C(s, d)$ such that
\[
C^{-1} |{\bf u}|_{\mathcal{S}_{2,s}(\mathbb{R}^{d})}  \leq |{\bf u}|_{W^{2,s}(\mathbb{R}^{d};\mathbb{R}^{d})}\leq C |{\bf u}|_{\mathcal{S}_{2,s}(\mathbb{R}^{d})}
\]
\end{theorem}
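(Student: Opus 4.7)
The plan is to reduce both seminorms to weighted $L^{2}$ integrals in frequency via Plancherel, and then compare the resulting weights on the Fourier side. Setting $h = \bdy - \bdx$, applying Plancherel in $\bdx$ for fixed $h$, and passing to polar coordinates $h = r\omega$ with $\omega\in S^{d-1}$, one obtains
\[
|{\bf u}|^{2}_{W^{s,2}(\mathbb{R}^{d};\mathbb{R}^{d})} = c_{1}(d,s) \int_{\mathbb{R}^{d}} |\xi|^{2s}\,|\hat{\bf u}(\xi)|^{2}\,d\xi,
\]
while the same manipulation applied to the projected difference quotient yields
\[
|{\bf u}|^{2}_{\mathcal{S}_{2,s}(\mathbb{R}^{d})} = c_{2}(s) \int_{\mathbb{R}^{d}}\int_{S^{d-1}} |\xi\cdot \omega|^{2s}\,|\hat{\bf u}(\xi)\cdot \omega|^{2}\,d\omega\, d\xi,
\]
where both constants originate from the one-dimensional integral $\int_{0}^{\infty} r^{-1-2s}|e^{ir}-1|^{2}\,dr$, finite precisely because $0<s<1$. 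I would establish these identities first for Schwartz vector fields, then extend to $C_{c}^{1}(\mathbb{R}^{d};\mathbb{R}^{d})$ by a standard approximation argument.

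The easy inequality $|{\bf u}|_{\mathcal{S}_{2,s}} \leq |{\bf u}|_{W^{s,2}}$ is immediate from the pointwise bound $|({\bf u}(\bdy)-{\bf u}(\bdx))\cdot (\bdy-\bdx)/|\bdy-\bdx||\leq |{\bf u}(\bdy)-{\bf u}(\bdx)|$ and needs no Fourier analysis. The nontrivial direction reduces, via the two identities above, to the pointwise (in $\xi$) inequality
\[
\int_{S^{d-1}} |\xi\cdot \omega|^{2s}\,|{\bf v}\cdot \omega|^{2}\,d\omega \geq c(d,s)\,|\xi|^{2s}|{\bf v}|^{2}
\]
for all $\xi\in\mathbb{R}^{d}\setminus\{0\}$ and ${\bf v}\in\mathbb{R}^{d}$, with $c(d,s)>0$. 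Applying this with ${\bf v}=\hat{\bf u}(\xi)$ and integrating in $\xi$ produces $|{\bf u}|_{W^{s,2}}\leq C(d,s)|{\bf u}|_{\mathcal{S}_{2,s}}$.

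To prove this spherical inequality I would exploit rotational invariance of the measure on $S^{d-1}$ to reduce to the case $\xi/|\xi| = {\bf e}_{d}$, so that the claim becomes $Q({\bf v}):={\bf v}^{T} M {\bf v}\geq c(d,s)|{\bf v}|^{2}$ with
\[
M_{ij} = \int_{S^{d-1}} |\omega_{d}|^{2s}\,\omega_{i}\omega_{j}\,d\omega.
\]
Odd symmetry in any single coordinate $\omega_{i}$ kills every off-diagonal entry, so $M$ is diagonal; the remaining rotational symmetry in the first $d-1$ coordinates forces $M_{11}=\cdots=M_{d-1,d-1}$, and both this common value and $M_{dd}=\int_{S^{d-1}}|\omega_{d}|^{2s+2}d\omega$ are strictly positive and depend only on $d,s$. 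Hence the least eigenvalue of $M$ is a positive constant $c(d,s)$, and by rotation this same bound applies for every direction $\xi/|\xi|$. This quadratic-form computation is the one real analytic step; the other ingredients (Plancherel, polar coordinates, density of $C_{c}^{1}$ in $W^{s,2}(\mathbb{R}^{d};\mathbb{R}^{d})$) are routine, and combining them with the equivalence of seminorms yields $\mathcal{S}_{2,s}(\mathbb{R}^{d}) = W^{s,2}(\mathbb{R}^{d};\mathbb{R}^{d})$ as stated.
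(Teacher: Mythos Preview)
Your proof is correct and follows essentially the same strategy as the paper: apply Plancherel to reduce the seminorm comparison to a pointwise (in $\xi$) quadratic-form inequality, then use rotational symmetry to diagonalize the resulting matrix and observe that its two distinct eigenvalues are strictly positive. The only cosmetic difference is that you pass to polar coordinates $h=r\omega$ first and carry the weight $|\xi\cdot\omega|^{2s}$ on the sphere, whereas the paper keeps the full $\mathbb{R}^{d}$ integral in $h$ and diagonalizes $\mathbb{M}(\xi)=\int_{\mathbb{R}^{d}}\frac{1-\cos(2\pi\xi\cdot h)}{|h|^{d+2+2s}}\,h\otimes h\,dh$ directly via a rotation; the symmetry argument and the conclusion are identical.
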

\begin{proof}
Let ${\bf u}\in \mathcal{S}_{2,s}(\mathbb{R}^{d})$. Then we can write
\[
\begin{split}
 |{\bf u}|_{\mathcal{S}_{2,s}(\Omega)}^{2}  &= \int_{\mathbb{R}^{d}}\int_{\mathbb{R}^{d}} \frac{|({\bf u}(\bdy) - {\bf u}(\bdx))\cdot \frac{(\bdy - \bdx)}{|\bdy-\bdx|}|^{2}}{|\bdy - \bdx|^{d + 2s}}d\bdy d\bdx
  = \int_{\mathbb{R}^{d}} {1\over |\bf h|^{d + 2s}} \|\tau_{{\bf h}}{\bf u}\|^{2}_{L^{2}(\mathbb{R}^{d})} d{\bf h}
 \end{split}
\]
where $\tau_{{\bf h}}{\bf u}(\bdx) = ({\bf u}(\bdx + {\bf h}) - {\bf u}(\bdx))\cdot \frac{{\bf h}}{|{\bf h}|}$. Note that the Fourier transform of $\tau_{{\bf h}}{\bf u}(\bdx)$ is given by
\[
\mathcal{F}(\tau_{{\bf h}}{\bf u})(\xi) = (e^{\imath 2\pi \xi\cdot {\bf h}} -1)\mathcal{F}({\bf u})(\xi)\cdot  \frac{{\bf h}}{|{\bf h}|}\,. 
\]
Using Parseval's identity and after a simple calculation we see that
\[
\begin{split}
\|\tau_{{\bf h}}{\bf u}\|_{L^{2}(\mathbb{R}^{d})}^{2} &=2 \int_{\mathbb{R}^{d}} \left|\mathcal{F}({\bf u}) \cdot \frac{{\bf h}}{|{\bf h}|}\right|^{2} (1 - \cos(2\pi \xi\cdot {\bf h})) d\xi. 
\end{split}
\]
We thus have
\begin{equation}\label{s-norm}
\begin{split}
 |{\bf u}|_{\mathcal{S}_{2,s}(\Omega)}^{2} &= 2 \int_{\mathbb{R}^{d}} {1\over |\bf h|^{d + 2s}}\int_{\mathbb{R}^{d}} \left|\mathcal{F}({\bf u}) \cdot \frac{{\bf h}}{|{\bf h}|}\right|^{2} (1 - \cos(2\pi \xi\cdot {\bf h})) d\xi d {\bf h}\\
 &= 2 \int_{\mathbb{R}^{d}} \langle \mathbb{M}(\xi) \mathcal{F}({\bf u})(\xi), \mathcal{F}({\bf u})(\xi) \rangle  d\xi
\end{split}
\end{equation}
where $\mathbb{M}(\xi)$ is the matrix-valued map
\[
\mathbb{M}(\xi) = \int_{\mathbb{R}^{d}} \frac{1 - \cos(2\pi \xi\cdot {\bf h})}{|{\bf h}|^{d+2+2s}} {\bf h}\otimes {\bf h}\, d{\bf h}. 
\]
 Let us make the change of variable  ${\bf z} = (z_{1}, {\bf z}')= 2\pi |\xi| \mathbb{Q}(\xi){\bf h}$, where for each $\xi \in \mathbb{R}^{d}$,  $\mathbb{Q}(\xi)$ is an orthogonal matrix with its first column given by ${\xi \over |\xi|}$. Then $|{\bf z}| = 2\pi |\xi||{\bf h}|$, and
\[
z_{1} = {\bf e}_{1}\cdot {\bf z} = 2\pi|\xi| \mathbb{Q}(\xi){\bf h}\cdot {\bf e}_{1} = 2\pi \xi\cdot {\bf h}.
\]
Moreover, after simplification
\[
\begin{split}
\mathbb{M}(\xi) &= \int_{\mathbb{R}^{d}} \frac{1 - \cos(2\pi \xi\cdot {\bf h})}{|{\bf h}|^{d+2+2s}} {\bf h}\otimes {\bf h}\, d{\bf h}\\
& = |\xi|^{2s} (2\pi)^{2s}\int_{\mathbb{R}^{d}} \frac{1 - \cos(z_1)}{|{\bf z}|^{d+2+2s}} \mathbb{Q}(\xi)^{T}{\bf z}\otimes \mathbb{Q}(\xi)^{T}{\bf z}\, d{\bf z}\\
& = |\xi|^{2s} (2\pi)^{2s} \mathbb{Q}(\xi)^{T} \left(\int_{\mathbb{R}^{d}} \frac{1 - \cos(z_1)}{|{\bf z}|^{d+2+2s}}{\bf z}\otimes  {\bf z} d{\bf z}\right) \mathbb{Q}(\xi). 
\end{split}
\]
Using change of variable using  rotations it is not difficult to see that the matrix in the integration is a constant diagonal matrix given by
\[
\int_{\mathbb{R}^{d}} \frac{1 - \cos(z_1)}{|{\bf z}|^{d+2+2s}}{\bf z}\otimes  {\bf z} d{\bf z} = \text{diag}(l_{1}, l_{2}, l_{2}. \cdots, l_{2})  = (l_{1} - l_{2})({\bf e}_{1}\otimes {\bf e}_{1}) + l_{2}\mathbb{I}_{d}
\]
where $\mathbb{I}_{d}$ is the $d\times d$ identity matrix and
\[
l_{1} = \int_{\mathbb{R}^{d}} \frac{(1 - \cos(z_1))z_{1}^{2}}{|{\bf z}|^{d+2+2s}} d{\bf z},\quad \quad l_{2} = \int_{\mathbb{R}^{d}} \frac{(1 - \cos(z_1))z_{2}^{2}}{|{\bf z}|^{d+2+2s}} d{\bf z}.
\]
After observing that
\[
\mathbb{Q}(\xi)^{T}  {\bf e}_{1}\otimes {\bf e}_{1}\mathbb{Q}(\xi) = \frac{\xi \otimes \xi}{|\xi|^{2}},
\]
we may rewrite $\mathbb{M}(\xi)$ as
\[
\mathbb{M}(\xi) = (2\pi)^{2s} (l _ {1} - l_{2}) |\xi|^{2s}   \frac{\xi \otimes \xi}{|\xi|^{2}} + l_{2}(2\pi)^{2s} |\xi|^{2s}\mathbb{I}_{d}. 
\]
As a consequence, we have that for any ${\bf v}, {\xi }\in \mathbb{R}^{d}$, we have that
\begin{equation}\label{M-estimate}
\langle \mathbb{M}(\xi){\bf v},{\bf v}\rangle \geq (2\pi)^{2s}|\xi|^{2s} \min\{l_{1}, l_{2}\} |{\bf v}|^{2}.
\end{equation}
We note that $l_{1}$ and $l_{2}$ are positive numbers and satisfy the relation
\[
l_{1} + (d-1)l_{2} = \kappa(d, s) = \int_{\mathbb{R}^{d}} \frac{(1 - \cos(z_1))}{|{\bf z}|^{d+2s}} d{\bf z}.
\]
Combining \ref{s-norm} and \ref{M-estimate} we observe that
\[
|{\bf u}|^{2}_{\mathcal{S}_{2,s}} \geq (2\pi)^{2s +1} \min\{l_{1}, l_{2}\} \int_{\mathbb{R}^{d}} |\xi|^{2s}|\mathcal{F}({\bf u}) (\xi)|^{2}d\xi =  (2\pi)^{2s +1} \min\{l_{1}, l_{2}\}|{\bf u}|^{2}_{H^{s}} \,.
\]
\end{proof}

{\bf Acknowledgment:} The author thanks  the anonymous  referee for a careful reading of the manuscript and comments that made the final version more clear.

\end{document}